\pgfplotsset{compat=newest}
\newcommand{\vlad}{\color{brown}}
\newtheorem{theorem}{Theorem}
\newtheorem*{theorem*}{Theorem}
\newtheorem{corollary}[theorem]{Corollary}
\newtheorem{proposition}[theorem]{Proposition}
\newtheorem{lemma}[theorem]{Lemma}
\newtheorem{definition}[theorem]{Definition}
\newtheorem{hypothesis}[theorem]{Hypothesis}
\newtheorem{remark}[theorem]{Remark}
\newcommand{\calO}{\mathcal{O}}
\newcommand{\OK}{\mathcal{O}_K}
\newcommand{\KR}{K_\mathbb{R}}
\newcommand{\R}{\mathbb{R}}
\newcommand{\Q}{\mathbb{Q}}
\DeclareMathOperator{\SL}{SL}
\DeclareMathOperator{\ML}{Mod}
\DeclareMathOperator{\GL}{GL}
\DeclareMathOperator{\cl}{cl}
\DeclareMathOperator{\Cl}{cl}
\DeclareMathOperator{\argmin}{argmin}
\DeclareMathOperator{\vol}{vol}
\DeclareMathOperator{\card}{\texttt{\#}}
\DeclareMathOperator{\rank}{rk}
\DeclareMathOperator{\spant}{span}
\DeclareMathOperator{\ind}{\mathlarger{\mathbf{1}}}
\DeclareMathOperator{\N}{N}
\DeclareMathOperator{\Tr}{tr}
\DeclareMathOperator{\Gr}{\mathbf{Gr}}
\newcommand{\nihar}{\color{red}}
\begin{document}
\title{Effective module lattices and their shortest vectors}
\author[N. Gargava]{Nihar Gargava}
\author[V. Serban]{Vlad Serban}
\author[M. Viazovska]{Maryna Viazovska}
\author[I. Viglino]{Ilaria Viglino}

\address{N. Gargava, Section of Mathematics, EPFL, Switzerland}
\email{nihar.gargava@epfl.ch}
\address{V. Serban, Department of Mathematics, New College of Florida, U.S.A.}
\email{vserban@ncf.edu}
\address{M. Viazovska, Section of Mathematics, EPFL, Switzerland}
\email{maryna.viazovska@epfl.ch}
\address{I. Viglino, Section of Mathematics, EPFL, Switzerland}
\email{ilaria.viglino@epfl.ch}

\maketitle

\begin{abstract}
We prove tight probabilistic bounds for the shortest vectors in module lattices over number fields using the results of \cite{GSV23}. Moreover, establishing asymptotic formulae for counts of fixed rank matrices with algebraic integer entries and bounded Euclidean length, we prove an approximate Rogers integral formula for discrete sets of module lattices obtained from lifts of algebraic codes. This in turn implies that the moment estimates of \cite{GSV23} as well as the aforementioned bounds on the shortest vector also carry through for large enough discrete sets of module lattices. 
\end{abstract}

\section{Introduction}
  \label{se:intro}

One of the central concerns in lattice-based cryptography is to prove or disprove the existence
of fast algorithms that can find, even if approximately, the length of the shortest vectors of a lattice.
The non-existence of such algorithms is still a wide-open question, and there exist well-publicized open challenges \cite{BLR08}
inviting competitors to submit algorithms solving the shortest vector problem (SVP) for "randomly" produced lattices. For random lattices in increasing dimension $d$, it turns out that the length of the shortest vector can be predicted almost exactly:
\begin{theorem}
\cite[Theorem 5]{LN20}
\label{th:phong}
Let $\Lambda\subseteq \mathbb{R}^{d}$ be a Haar-random lattice chosen from the space $\SL_{d}(\mathbb{R})/\SL_{d}(\mathbb{Z})$. Then, as $d \rightarrow \infty$, with probability $1- o(1)$ we have 
\begin{equation}
  1-\frac{\log \log d}{d} \leq \frac{\lambda_1(\Lambda)}{\gamma(d)} \leq 1 + \frac{\log\log d}{d},
\end{equation}
where $\gamma(d)$ is radius of a ball of unit volume in $d$ dimensions. 
\end{theorem}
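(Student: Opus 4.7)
The plan is to leverage the Siegel and Rogers mean value formulas on $\SL_d(\mathbb{R})/\SL_d(\mathbb{Z})$ applied to the counting function $N_r(\Lambda) := \#\{v \in \Lambda \setminus \{0\} : \|v\| \leq r\}$ at two threshold radii straddling $\gamma(d)$. Since $\gamma(d)$ is the radius of the unit-volume Euclidean ball $B_{\gamma(d)} \subset \mathbb{R}^d$, Siegel's integral formula gives directly
\[
\mathbb{E}[N_r(\Lambda)] \;=\; \vol(B_r) \;=\; (r/\gamma(d))^d.
\]

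For the easy direction (lower bound on $\lambda_1$), set $r_- := \gamma(d)\bigl(1 - \tfrac{\log\log d}{d}\bigr)$. Then
\[
\mathbb{E}[N_{r_-}] \;=\; \bigl(1 - \tfrac{\log\log d}{d}\bigr)^d \;=\; e^{-\log\log d + o(1)} \;=\; \frac{1+o(1)}{\log d},
\]
so Markov's inequality yields $\Pr\bigl[\lambda_1(\Lambda) \leq r_-\bigr] \leq \mathbb{E}[N_{r_-}] = o(1)$.

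For the harder direction (upper bound on $\lambda_1$), set $r_+ := \gamma(d)\bigl(1 + \tfrac{\log\log d}{d}\bigr)$, so that $\mu := \mathbb{E}[N_{r_+}] = (1+o(1))\log d \to \infty$. Rogers' second moment formula, valid for $d \geq 3$, expands
\[
\mathbb{E}\bigl[N_{r_+}^2\bigr] \;=\; \mu^2 \;+\; 2\mu \;+\; \mathcal{E}(r_+),
\]
where the $2\mu$ absorbs the diagonal contributions from pairs $(v,v)$ and $(v,-v)$ and the remainder $\mathcal{E}(r_+)$ collects contributions from pairs of rationally dependent vectors $(pw, qw)$ with $\gcd(p,q) = 1$ and $|p| + |q| \geq 3$. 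Provided one can show $\mathcal{E}(r_+) = O(\mu)$, Chebyshev's inequality yields
\[
\Pr\bigl[\lambda_1(\Lambda) > r_+\bigr] \;=\; \Pr\bigl[N_{r_+} = 0\bigr] \;\leq\; \frac{\mathrm{Var}(N_{r_+})}{\mu^2} \;=\; O(1/\log d) \;=\; o(1).
\]

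The main obstacle is the bookkeeping required to control $\mathcal{E}(r_+)$: one must integrate, for each coprime pair $(p,q)$, the volume of the intersection $B_{r_+/|p|} \cap B_{r_+/|q|}$ and sum these contributions against the appropriate density on primitive pairs. For $d \geq 3$ the factor $(r_+/\max(|p|,|q|))^d$ decays rapidly in the denominator, so the sum converges and is dominated by its first term, giving $\mathcal{E}(r_+) = O(\mu)$; this is essentially the calculation underlying Rogers' original derivation and can be invoked as a black box. No additional obstacle arises from the choice of radii, which has been calibrated precisely so that both the Markov and Chebyshev bounds contribute $o(1)$.
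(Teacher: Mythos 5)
The paper does not actually prove this statement: it is imported verbatim as \cite[Theorem 5]{LN20}, so there is no internal proof to compare against. Your argument is the standard (and correct) route to such a result, and it closely parallels how the paper itself later proves its module analogue (Theorem \ref{th:as_module_bound}), namely a two-radius first/second moment sandwich: Markov at $r_-$ via Siegel's mean value theorem, Chebyshev at $r_+$ via a Rogers-type second moment bound, with both failure probabilities of size $O(1/\log d)$. Two small points of care. First, your bookkeeping of the dependent pairs is slightly off as stated: the collinear pairs of nonzero vectors are $(mw,nw)$ with $w$ primitive and $m,n$ arbitrary nonzero integers (equivalently, coprime pairs rescaled), and the primitive-vector Siegel formula carries a density $1/\zeta(d)$; the exact diagonal contribution $2\mu$ then comes from $|m|=|n|$ summed over all $m$, while the terms with $|m|\neq|n|$ are bounded by $\mu\sum_{k\geq 2}O(k)\,k^{-d}=O(\mu\,2^{-d+2})$, which is not merely $O(\mu)$ but exponentially small relative to $\mu$ --- so your claim $\mathcal{E}(r_+)=O(\mu)$ holds with room to spare, but the cleanest justification goes through this primitive-vector decomposition rather than only coprime pairs. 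Second, you should note explicitly that Rogers' second moment identity used here is valid for $d\geq 3$ (no issue since $d\to\infty$), and that Chebyshev is applied in the form $\Pr[N_{r_+}=0]\leq\Pr[|N_{r_+}-\mu|\geq\mu]\leq\mathrm{Var}(N_{r_+})/\mu^2$. With these clarifications your proof is complete and matches the method underlying the cited source.
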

\begin{remark}
\label{re:gamma}
For large $d$, $\gamma(d) \simeq \sqrt{\tfrac{d}{2 \pi e}}$ by Stirling's approximation.
\end{remark}

A weaker form of this and similar results is stated by Ajtai in \cite{A02} without proof.
In the lattice challenges in \cite{BLR08}, this result is used to set benchmarks for challenge contenders. Such results greatly inform our understanding and analysis of lattice reduction algorithms \cite{LN20,EK20}. \par

One of the important questions in current lattice-based cryptography concerns the analysis of the hardness of problems such as SVP and related on lattices with additional algebraic structure that appear in efficient cryptographic implementations. Module lattices are algebraically $\OK$-modules, where $\OK$ is the ring of integers of a number field $K$, and the latter is often taken to be a cyclotomic field. They are one of the main special classes of lattices studied, see e.g. \cite{LS15,DMPT23}. See also work on ring-based versions of the learning with errors (LWE) and short integer solutions (SIS) problems such as \cite{RingLWE2010,RingLWE}.
Producing asymptotic estimates like Theorem \ref{th:phong} for such lattices in growing dimensions is a bit trickier because in this setting one often desires to understand the behavior when increasing the degree of the number field.

In this article we obtain analogous results to the above theorem for many families of module lattices. 
For example, we obtain:
\begin{theorem}
	\label{th:as_module_bound}
Let $t \geq 27$ be fixed.
Let $K = \mathbb{Q}(\mu_{k})$ be a cyclotomic number field and let $d = t \cdot \deg K = t \varphi(k)$. 
Consider a Haar-random 
unit covolume
module lattice $\Lambda \subseteq K^{t} \otimes \mathbb{R}$ in the moduli space of rank-$t$ module lattices over $K$ (see Section \ref{se:module}).
Then, as $k \rightarrow \infty$, we have with probability $1- o(1)$ that
\begin{equation}
 1- \frac{\log \log k}{d} \leq k^{-\frac{1}{d}}\frac{\lambda_1(\Lambda)}{\gamma(d)} \leq  1 + \frac{\log \log k}{d}.
 \label{eq:bound}
\end{equation}
\end{theorem}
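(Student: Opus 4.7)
My plan is to adapt the Markov/Chebyshev strategy underlying the proof of Theorem \ref{th:phong} to the module-lattice setting, with the moment estimates for Haar-random module lattices over $K$ from \cite{GSV23} replacing the classical Rogers integral formula on $\SL_d(\R)/\SL_d(\Z)$. Let $N_r(\Lambda) := \#\{0 \neq v \in \Lambda : \|v\| \leq r\}$. The lower and upper halves of \eqref{eq:bound} will come respectively from a first-moment (Markov) estimate on $N_{r_-}$ and a second-moment (Chebyshev) estimate on $N_{r_+}$, where $r_{\pm} := \gamma(d)\, k^{1/d}\bigl(1 \pm \tfrac{\log\log k}{d}\bigr)$.

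The first step is to read off, from the module-lattice Siegel/Rogers-type first-moment formula of \cite{GSV23}, the asymptotic behavior of $\mathbb{E}[N_r(\Lambda)]$ in the unit-covolume normalization of the theorem. I expect this formula to have the shape $\mathbb{E}[N_r(\Lambda)] = c_K\, \vol(B_r)\bigl(1+o(1)\bigr)$, where $c_K$ depends on arithmetic invariants of $K = \Q(\mu_k)$ (in particular its discriminant) and behaves like $k^{-1}$ up to lower-order factors. This is exactly what fixes the natural threshold at $r \approx \gamma(d)\, k^{1/d}$: one then has $\mathbb{E}[N_{r_\pm}] = (\log k)^{\pm 1}\bigl(1+o(1)\bigr)$. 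Markov's inequality at $r_-$ immediately gives $\Pr[\lambda_1(\Lambda) \leq r_-] \leq \mathbb{E}[N_{r_-}] = o(1)$, which is the lower bound in \eqref{eq:bound}.

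For the upper bound I would apply Chebyshev's inequality, so that $\Pr[N_{r_+} = 0] \leq \mathrm{Var}(N_{r_+})/\mathbb{E}[N_{r_+}]^2$, reducing matters to showing $\mathbb{E}[N_{r_+}^2] = \bigl(1+o(1)\bigr)\mathbb{E}[N_{r_+}]^2$. The second-moment formula of \cite{GSV23} for module lattices should decompose $\mathbb{E}[N_r(\Lambda)^2]$ into a diagonal contribution equal to $\mathbb{E}[N_r]^2$ plus off-diagonal contributions indexed by $\OK$-linear dependencies among pairs of nonzero lattice vectors, weighted by sums over fractional ideals of $\OK$. The numerical hypothesis $t \geq 27$ should be precisely the threshold ensuring that all these off-diagonal contributions converge absolutely and remain negligible compared to the main term, uniformly in $k$.

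The principal obstacle is exactly this uniform control of the second moment. One needs to verify that (i) each off-diagonal sum, typically of the form $\sum_{\mathfrak{a}} \N(\mathfrak{a})^{-\alpha(t)}$, converges, which imposes a lower bound on $t$; and (ii) the aggregate off-diagonal contribution is $o(\mathbb{E}[N_{r_+}]^2)$ with a decay rate strong enough to make the Chebyshev deviation compatible with the narrow window $\log\log k/d$ of \eqref{eq:bound}. Once this is in place, a union bound on the two tail events $\{\lambda_1 \leq r_-\}$ and $\{\lambda_1 > r_+\}$ yields \eqref{eq:bound} with probability $1-o(1)$. A final small point is to make sure the implicit $o(\cdot)$ constants are uniform in the relevant range of $r$, which I would handle by a standard monotonicity argument applied to $r \mapsto N_r$.
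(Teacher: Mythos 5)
There is a genuine gap, and it sits at the very first step. Under the paper's normalization (the inner product of Equation (\ref{eq:norm}) makes $\OK$, hence the Haar-random module lattices, unit covolume), the Siegel-type mean value theorem gives $\mathbb{E}[N_r(\Lambda)] = \vol(B_r)$ \emph{exactly}, with no factor $c_K \sim k^{-1}$: the first moment is $\omega_K\cdot m_1(V/\omega_K)=V$ in the notation of Theorem \ref{thm:mainGSV}. So at your radius $r_-$ one has $\mathbb{E}[N_{r_-}] = k/\log k \to \infty$, and Markov's inequality applied directly to $N_{r_-}$ proves nothing. The factor $k$ in the threshold $\gamma(d)k^{1/d}$ does not come from a diminished point density; it comes from the arithmetic fact that $\mu_k\subset\OK^\times$ acts on $\Lambda$ by isometries and acts freely on nonzero vectors, so $N_r(\Lambda)$ is always a multiple of $\omega_K\asymp k$. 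This is the missing idea: the paper applies the first-moment bound to $\rho(\Lambda)/k$ (equivalently, uses $\mathbb{P}[N_{r}>0]=\mathbb{P}[N_{r}\geq k]\leq V/k$), which is what makes the lower bound in (\ref{eq:bound}) work with $V=k/\log k$. As written, your lower-bound argument rests on an incorrect first-moment asymptotic and fails.

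The Chebyshev half is closer to salvageable, but your moment bookkeeping is off by factors of $k$ for the same reason. With $V=k\log k$ the correct second moment (Proposition \ref{cor:introcyclo}) is $\mathbb{E}[\rho^2]=V^2+V\cdot k\cdot(1+O(e^{-\varepsilon d(t-t_0)}))$: the dominant correction to $\mathbb{E}[\rho]^2$ is the term $Vk$ coming precisely from pairs of vectors differing by a root of unity (the count behaves like $\omega_K$ times a Poisson variable of parameter $V/\omega_K$), not from generic off-diagonal ideal sums that you hope to show are negligible. The variance is thus $\asymp Vk$, not $\asymp V$, and Chebyshev then gives $\mathbb{P}[\rho=0]\ll k/V = 1/\log k$, matching the paper's conclusion; so the upper bound goes through once the moments are corrected, but only because the $V\omega_K$ term is identified and controlled. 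Finally, the hypothesis $t\geq 27$ is not a convergence threshold for off-diagonal ideal sums: it is the explicit value $t_0=267/10$ from \cite{GSV23} for which the error term $E_{n,t,K}\ll (td)^{(n-2)/2}\omega_K^{n^2/4}Z(K,t,n)e^{-\varepsilon d(t-t_0)}$ in Theorem \ref{thm:mainGSV} is exponentially small uniformly along the cyclotomic tower, which relies on the uniform Weil height (Bogomolov) lower bound rather than mere absolute convergence.
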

The $t \geq 27$ condition is likely not tight and appears for technical reasons. Improvements to our methods might further reduce this lower bound to values like 3,4,5 (we need $t > 2$ for two moments to exist). Note that in the statement above $k^\frac{1}{d}  \simeq 1 + \tfrac{\log k}{t\varphi(k)}.$
The choice of the sequence of number field can be changed as long as one has uniform bounds on the absolute Weil height of algebraic numbers. This is always true for abelian extensions such as cyclotomic fields. See Section \ref{se:bounds_for_vectors} for definitions and more details.

Our main tool in studying these random module lattices is the well-known Rogers integration formula \cite{K19,hughes2023mean,GSV23}, which then leads us to moment estimates for the number of non-trivial lattice points of bounded Euclidean length as a random variable over the probability space of Haar-random lattices. \par
This study was carried through by the first three authors in \cite{GSV23}. However, in this work we refine these results and show that the same integration formula and moment estimates also hold for effective lattice constructions in the sense of \cite{GM03,M16} obtained from pre-images of algebraic codes via projection maps from characteristic zero. Since averages over these ``lifts of codes'' constructions nicely 
approximate averages with respect to the Haar-induced measure on module lattices,
results such as Theorem \ref{th:as_module_bound} can also be attained for these discretized sets of lattices coming from algebraic codes of increasing size. This should be interesting from a computational and coding-theoretic perspective.

While establishing an effective Rogers integral formula (see Theorem \ref{th:higher_moments} for the statement), one of the hurdles we encounter is needing a generalization of the following result of Katznelson \cite{K1994} which originally appeared for $K=\mathbb{Q}$. We believe this result to be of independent interest.
\begin{theorem}\label{thm:KatznelsonOK}
For $m\leq n<t$, let $N(T;m,n,t,K)$ denote the set of $n \times t$ matrices with coefficients in $\OK$ and rank $m$ such that each entry has norm bounded by $T$ with respect to a suitable norm on $\KR$. Then, for some explicit constant $C>0$, one has for any $\varepsilon > 0$ that as $T$ tends to infinity
\begin{equation}
    N(T;m,n,t,K) = C \cdot T^{mt \deg K} + O(T^{mt\deg K-1+\varepsilon}).
\end{equation}

\end{theorem}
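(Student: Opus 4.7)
The plan is to parameterize rank-$m$ matrices $M \in M_{n \times t}(\OK)$ by their $K$-column span $V(M) \subset K^n$, an $m$-dimensional $K$-subspace. For each such $V$, the columns of $M$ are $t$ vectors in the $\OK$-module $L_V := V \cap \OK^n$ which together $K$-span $V$. Writing $B_n(T) := \{v \in \KR^n : \|v_i\|_{\KR} \leq T\}$, this yields the decomposition
\[
N(T;m,n,t,K) = \sum_{V \in \Gr(m, K^n)} S_V(T), \quad S_V(T) := \#\bigl\{(c_i) \in (L_V \cap B_n(T))^t : \spant_K(c_i) = V\bigr\},
\]
and reduces the problem to (a) estimating $S_V(T)$ for each $V$ via lattice-point counting in $L_V$, and (b) summing the contributions over the infinite Grassmannian.

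For (a), a Gauss-type argument for the Dedekind $\OK$-module $L_V \subset V \otimes \R$ yields
\[
\#(L_V \cap B_n(T)) = \frac{\vol((V \otimes \R) \cap B_n(T))}{\mathrm{covol}(L_V)} + O(T^{m\deg K - 1}),
\]
with implicit constant depending on the successive minima of $L_V$. By homogeneity, $\vol((V \otimes \R) \cap B_n(T)) = T^{m\deg K} g(V)$ with $g(V) := \vol((V \otimes \R) \cap B_n(1))$. Non-spanning tuples live in proper $K$-subspaces $W \subsetneq V$ and contribute an $O(T^{-t\deg K})$ factor relative to the main term, which is lower order since $t\deg K \geq 1$. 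One thus obtains $S_V(T) = T^{mt\deg K}\, g(V)^t / \mathrm{covol}(L_V)^t + (\text{error})$ per $V$.

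For (b), the main input is the Schmidt--Thunder asymptotic $\#\{V \in \Gr(m, K^n) : H(V) \leq H\} \asymp H^{n\deg K}$ for $K$-rational subspaces of bounded arithmetic height. Since $\mathrm{covol}(L_V)$ scales like $H(V)^{\deg K}$, the leading sum $C := \sum_V g(V)^t/\mathrm{covol}(L_V)^t$ converges precisely when $t > n$, exactly matching the hypothesis, and defines the leading constant. The main obstacle, and crux of the proof, is the uniform error analysis: the first-order error contributes, after summation over $V$, a term of the form $T^{mt\deg K - 1}\sum_V g(V)^{t-1}/\mathrm{covol}(L_V)^{t-1}$, a sum that is logarithmically divergent in the boundary case $t = n+1$. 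I would handle this by truncating the Grassmannian sum at $H(V) \leq c\, T^{m}$ (beyond which Minkowski's theorem forces $L_V \cap B_n(T) = \{0\}$ and hence $S_V(T) = 0$) and estimating the truncated sum directly; the logarithmic factor absorbs into the $T^\varepsilon$. Combining this truncation with Widmer-style uniform versions of Davenport's lattice-point error bounds, which control the dependence on the successive minima of $L_V$, should yield the claimed total error $O(T^{mt\deg K - 1 + \varepsilon})$.
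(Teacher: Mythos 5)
Your overall strategy---fibering the rank-$m$ matrices over the $K$-subspace spanned by their columns, counting points of the primitive module $L_V=V\cap\OK^n$ in a box, summing over the Grassmannian via Schmidt--Thunder, truncating at heights where Minkowski forces $L_V\cap B_n(T)$ to be trivial, and controlling skew $V$ through successive-minima-sensitive error bounds---is essentially the route the paper takes: the theorem is deduced from Proposition \ref{pr:convergence}, whose proof decomposes the rank-$m$ matrices over primitive modules $\Lambda_D$, compares Riemann sums with integrals, uses Theorem \ref{th:schmidt} for the sum over subspaces, and treats the skew subspaces by the successive-minima parameterization of Lemma \ref{le:props_of_minima} together with the partial-summation Lemma \ref{le:domain_dimension_bound}. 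So the skeleton is sound and matches the paper; the skew-case summation in your sketch is only asserted (``Widmer-style bounds \dots should yield''), but the tools you name are the right ones.

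The genuine gap is the one-line treatment of the non-spanning tuples. The claim that, for each fixed $V$, tuples lying in a proper subspace $W\subsetneq V$ contribute an $O(T^{-t\deg K})$ factor relative to the main term is false precisely for the skew $V$ you worry about elsewhere. Take $K=\Q$, $n=3$, $m=2$, $t\geq 4$, and $V\subset\Q^3$ with $L_V=V\cap\mathbb{Z}^3$ generated by $e_1$ and a vector of length $\asymp T$ (e.g.\ $(0,N,1)$ with $N\asymp T$), so $\mathrm{covol}(L_V)\asymp T$. The main term for this $V$ is $\asymp T^{2t}/T^{t}=T^{t}$, while the tuples whose columns are all multiples of $e_1$ of length at most $T$ already number $\asymp T^{t}$: the non-spanning contribution is of the same order as the main term, not smaller by $T^{-t}$, and such $V$ lie well inside your truncation range and are numerous. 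Consequently these contributions cannot be absorbed subspace-by-subspace; one must bound them globally, which requires counting, for each lower-rank configuration (each primitive rank-$s$ module $\Lambda_{D'}$, $s<m$), how many rank-$m$ subspaces of height up to the truncation contain it, and then redoing the height summation with that multiplicity. This is exactly the content of the paper's Lemma \ref{lemma1} and the surrounding ``lower rank matrices'' analysis, where the final bound $\ll\beta^{m(t-n)d}\log(1/\beta)$ is what makes the lower-rank terms admissible. Without an argument of this kind your claimed error term $O(T^{mt\deg K-1+\varepsilon})$ is not established, even after allowing the $T^{\varepsilon}$.
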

This result follows from Proposition \ref{pr:convergence} by substituting $g$ as the indicator function of appropriate balls in the proposition. Similarly, the value of the constant $C>0$ can be derived from our results.

\section*{Acknowledgements}

We would like to thank Andreas Str\"ombergsson for comments on our preprint \cite{GSV23} that motivated the writing of much of this article. The writing has also benefited from some discussions with Phong Nguyen, Thomas Espitau and Seungki Kim.

This research was partly funded by the Swiss National Science Foundation (SNSF), Project funding (Div. I-III), "Optimal configurations in multidimensional spaces",
184927. 

\section{Moduli space of module lattices}
\label{se:module}

Let $K$ be a number field of signature $(r_1,r_2)$, $r_1 + 2 r_2 = \deg K$,
and let $\OK$ be the ring of integers of the number field. Let $t$ be a positive integer.
We fix, for once and for all, the following  inner product on $K \otimes \mathbb{R} \simeq \mathbb{R}^{r_1} \times \mathbb{C}^{r_2}$
\begin{equation}
  \langle x,y\rangle = |\Delta_K|^{-\frac{2}{\deg K}}  \Tr(x\overline{y}),
  \label{eq:norm}
\end{equation}
where $\Delta_K$ is the discriminant of the number field. The involution $\overline{(\ ) }$ in \eqref{eq:norm}
denotes complex conjugation
on all the complex places of $K$.
Observe that, equipped with such a quadratic form, the lattice $\OK \subseteq K \otimes \mathbb{R}$ has unit covolume. For any $n \in \mathbb{Z}_{\geq 1}$, the Euclidean space $K^{n} \otimes \mathbb{R} = (K\otimes \mathbb{R})^{n}$ comes equipped with the structure from $n$-fold copies of this underlying inner product.
For notational simplicity, we will abbreviate $\KR = K \otimes \mathbb{R}$. 

The goal of this research is to consider random rank-$t$ $\OK$-modules inside $\KR^t$. 
We start with the setup of \cite{DK22} and the following definition.
\begin{definition}
A module lattice in $\KR^{t}$ is a pair 
$(g,M)$ where $g \in \GL_{t}(K \otimes \mathbb{R})$ and $M \subseteq K^{t}$ is a finitely generated $\OK$-module of maximal rank so that 
\begin{equation}
	\vol\left(K^{t} \otimes \mathbb{R} / (g^{-1} \cdot M ) \right) = 1.
\end{equation}
Two module lattices are considered equal if the lattice $g^{-1}M \subseteq K^{t} \otimes \mathbb{R}$ is identical.

A module lattice is called rational, or simply an $\OK$-module, if it is equal to $(1,M)$ for some $M \subseteq K^{n}$.
When not ambiguous, we may refer to $M$ as the module lattice $(1, M)$. The set of all module lattices of rank $t$ over $K$ shall be denoted by $\ML_{t}(K)$.
\end{definition}

\begin{remark}
We only consider unit determinant lattices in this definition. In the literature, module lattices don't necessarily have unit determinant. However, dropping the determinant condition leaves a space of lattices that has infinite volume with respect to the Haar measure and therefore is not a probability space. Moreover, the questions addressed in this work really only concern lattices up to scaling. 
\end{remark}

On the space of module lattices, we have a natural left action of $\GL_{t}(\KR)$ as 
\begin{equation}
 h :(g,M)  \mapsto (  |\N(\det h)|^{-\frac{1}{t \deg K}}h g , M).
\end{equation}
This descends to an action of $\GL_t(\KR)/\mathbb{R}_{>0}$.

\subsection{Steinitz classes}

For each module lattice, one can define its Steinitz class:
\begin{definition}
Let $\Lambda = (g,M)$ be a module lattice with $\det(\Lambda)=1$. 
Let $\cl(K)$ be the class group of $K$.
Then the Steinitz class of $(g,M)$ is denoted by 
\begin{equation}
[\Lambda] =  [\mathcal{I}] \in \cl(K),
\end{equation}
where $\mathcal{I}$ is a fractional ideal in $K$ 
generated by the set
\begin{equation}
  \{ \det( v_1,v_2,\dots,v_t) \mid v_1,v_2,\dots,v_t \in M\}.
\end{equation}
That is, $[\Lambda]$ is the ideal class of the fractional ideal generated by all the determinants of $t$-tuples of vectors in $M$.
\label{de:steinitz}
\end{definition}

\begin{remark}
Observe that the Steinitz class is invariant under the action of $\GL_{t}(K)$. 
Indeed, such an action will only change the determinants generating the fractional ideal $\mathcal{I}$ 
in Definition \ref{de:steinitz}
by a uniform scalar in $K^{\times}$. 

One therefore sees that Definition \ref{de:steinitz} is well-posed in the sense that if $(g_1,M_1) = (g_2,M_2)$ are the same module lattice, 
then $g_1g_2^{-1}$ is an $\OK$-linear map from $M_2$ to $M_1$ and hence
\begin{equation}
  g_1g_2^{-1} \in \GL_{t}(K) \subsetneq \GL_{t}(K\otimes \mathbb{R}).
\end{equation}
\end{remark}

One should view the definition of Steinitz class in light of the classification of torsion-free modules over a Dedekind domain. Applied to $\OK$ this yields:
\begin{proposition}
As an abstract $\OK$-module, a rank $t$ torsion-free $\OK$ module is isomorphic to 
    $$\OK^{t-1}\oplus \mathfrak{a}$$
for some $\OK$-ideal $\mathfrak{a}$. Moreover, two such modules $\OK^{t-1}\oplus \mathfrak{a}_1$ and $\OK^{t-1}\oplus \mathfrak{a}_2$ are isomorphic if and only if there exists $x\in K^\times$ with $\mathfrak{a}_1=x\cdot \mathfrak{a}_2$. 
\end{proposition}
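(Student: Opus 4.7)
The plan is to proceed by induction on the rank $t$, using the standard theory of finitely generated modules over the Dedekind domain $\OK$. For the existence half, the base case $t=1$ is handled by embedding a rank-$1$ torsion-free $\OK$-module $M$ into $M \otimes_{\OK} K \cong K$ and noting that the image is a finitely generated $\OK$-submodule of $K$, hence a fractional ideal.

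For the inductive step, I fix an embedding $M \hookrightarrow K^{t}$ and consider the projection onto the last coordinate. The image is a nonzero finitely generated $\OK$-submodule of $K$, thus a fractional ideal $\mathfrak{b}$, and the kernel $N \subseteq M$ is a rank-$(t-1)$ torsion-free $\OK$-module. Since $\OK$ is Dedekind, every fractional ideal is invertible and therefore projective, so the short exact sequence
\begin{equation}
0 \longrightarrow N \longrightarrow M \longrightarrow \mathfrak{b} \longrightarrow 0
\end{equation}
splits, yielding $M \cong N \oplus \mathfrak{b}$. Applying the inductive hypothesis to $N$ gives $M \cong \OK^{t-2} \oplus \mathfrak{a}' \oplus \mathfrak{b}$, and one then invokes the Steinitz-type lemma $\mathfrak{a}' \oplus \mathfrak{b} \cong \OK \oplus (\mathfrak{a}'\mathfrak{b})$ to collapse the two ideal summands into one, with Steinitz class $[\mathfrak{a}'][\mathfrak{b}]$.

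For the uniqueness half, the cleanest invariant is the top exterior power: a direct computation shows $\bigwedge^{t}_{\OK}(\OK^{t-1} \oplus \mathfrak{a}) \cong \mathfrak{a}$ as an $\OK$-module. Hence an isomorphism $\OK^{t-1} \oplus \mathfrak{a}_1 \cong \OK^{t-1} \oplus \mathfrak{a}_2$ forces $\mathfrak{a}_1 \cong \mathfrak{a}_2$ as $\OK$-modules. Such an isomorphism, after tensoring with $K$, is multiplication by some $x \in K^\times$, which proves $\mathfrak{a}_1 = x \cdot \mathfrak{a}_2$. Conversely, multiplication by $x$ gives the desired $\OK$-module isomorphism.

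The main technical obstacle is the Steinitz lemma $\mathfrak{a} \oplus \mathfrak{b} \cong \OK \oplus (\mathfrak{a}\mathfrak{b})$. The standard approach is to use that in a Dedekind domain every ideal class contains representatives coprime to any fixed ideal, so one can find $\alpha \in \mathfrak{a}$ and $\beta \in \mathfrak{b}$ with $\alpha \mathfrak{b}^{-1} + \beta \mathfrak{a}^{-1} = \OK$; an explicit $2 \times 2$ matrix with entries in appropriate ideals and determinant a unit then provides the isomorphism. All other ingredients (splitting of projective resolutions, identification of $\bigwedge^{t}$) are routine once this lemma is in hand.
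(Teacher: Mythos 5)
The paper gives no proof of this proposition: it is quoted as the classical Steinitz classification of finitely generated torsion-free modules over a Dedekind domain, so there is no internal argument to compare yours against. Your proof is the standard textbook argument and is correct: induction via the exact sequence $0 \to N \to M \to \mathfrak{b} \to 0$, which splits because fractional ideals over a Dedekind domain are invertible and hence projective; the Steinitz lemma $\mathfrak{a}\oplus\mathfrak{b} \cong \OK \oplus \mathfrak{a}\mathfrak{b}$ to merge the ideal summands; and the identification $\bigwedge^{t}_{\OK}(\OK^{t-1}\oplus\mathfrak{a}) \cong \mathfrak{a}$ for uniqueness, combined with the fact that any $\OK$-module isomorphism of fractional ideals becomes, after tensoring with $K$, multiplication by some $x \in K^\times$. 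Two small points to tidy: (i) choose the embedding $M \hookrightarrow K^{t}$ via $M \otimes_{\OK} K \cong K^{t}$ so that $M$ spans, which is what guarantees the last-coordinate projection is nonzero and the kernel has rank exactly $t-1$; (ii) in your sketch of the Steinitz lemma the coprimality condition should read $\alpha\mathfrak{a}^{-1} + \beta\mathfrak{b}^{-1} = \OK$ for $\alpha \in \mathfrak{a}$, $\beta \in \mathfrak{b}$ (you swapped the inverses); choosing $x \in \mathfrak{a}^{-1}$, $y \in \mathfrak{b}^{-1}$ with $\alpha x + \beta y = 1$, the determinant-one $2\times 2$ matrix then furnishes the isomorphism. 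Finally, since the statement asks for an integral ideal $\mathfrak{a}$, scale the fractional ideal produced by your induction into $\OK$; this changes nothing up to isomorphism.
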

The Steinitz class of $\OK^{t-1}\oplus \mathfrak{a}$ is then simply the ideal class $[\mathfrak{a}]$ of $\mathfrak{a}$ and classifies the lattices as abstract $\OK$-modules.  
We then have a transitive action of the Lie group $\GL_t(\KR)/\mathbb{R}_{>0}$ on module lattices of fixed Steinitz class. This implies that each connected component of $\ML_t(K)$ looks like 
$\GL_t(\KR)/\Gamma \cdot \mathbb{R}_{>0}$ for some arithmetic group $\Gamma$.
One can write this 
as the following ``exact'' sequence of sets. 
\begin{equation}
\GL_{t}(\KR)/ (\mathbb{R}_{>0} \cdot \GL_{t}(\OK)) \hookrightarrow  \ML_t(K) \twoheadrightarrow \cl(K) 
\end{equation}
From this, we can then extend the structure to the level of topological measure spaces. This produces a natural probability measure on $\ML_{t}(K)$, each of whose components have the Haar measure as a homogeneous space of $\GL_t(\KR)/\mathbb{R}_{>0}$. 
Note that for $t\geq 2$, $\ML_{t}(K)$ is a non-compact probability space.

\begin{remark}
\label{re:adelic}

One could take an adelic point of view as in \cite{DK22} and see $\ML_t(K)$ as an adelic quotient. 
In this way, $\ML_t(K)$ really is seen to carry a Haar measure as a homogeneous space of a locally compact group.
\end{remark}

\subsection{Lifts of codes and Haar measure}

In practice, we will consider module lattices that can be generated from algebraic codes as follows: consider unramified prime ideals $\mathcal{P}\subseteq \mathcal{O}_K$ and fix a number $s \in \{1,\dots,d-1\}$. If $\pi_\mathcal{P}:\mathcal{O}_K \rightarrow k_\mathcal{P}$ is the projection map to the residue field $k_\mathcal{P}$, we consider the set of unit covolume lattices:

\begin{equation}
\mathcal{L}(\mathcal{P},s) = \{ \beta \pi_\mathcal{P}^{-1}(S) \mid S\subseteq k_\mathcal{P}^{t} \text{ is an $s$-dimensional $k_\mathcal{P}$-subspace}\}
\label{eq:def_of_L}
\end{equation}
where 
\footnote{
	There is an abuse of notation in Equation (\ref{eq:def_of_L}) as $\pi_\mathcal{P}$ denotes a map on $\mathcal{O}_K^{t}\rightarrow k_\mathcal{P}^{t}$. Henceforth, this map is to be understood as ``applying the mod $\mathcal{P}$ operation at each coordinate''.
}
\begin{equation}
\beta = \beta(\mathcal{P},s) = \N(\mathcal{P})^{-\left(1-\frac{s}{t}\right)\frac{1}{[K:\mathbb{Q}]}}.
\end{equation}
Indeed, this scaling factor $\beta$ ensures that the lattice $ \beta \pi_\mathcal{P}^{-1}(S)$ has the same covolume as $\mathcal{O}_K^{t} \subseteq K_\mathbb{R}^{t}$.

The lattices $\mathcal{L}(\mathcal{P},s) \subset \ML_{t}(K)$ are module lattices. They have the same Steinitz class $[\mathcal{P}]^{t-s}\in \Cl(K)$ for fixed $\mathcal{P},s$ and therefore lie on the same connected component of $\ML_{t}(K)$. The ideal classes $[\mathcal{P}]$ are known to equidistribute in the class group as the prime grows in norm $\N(\mathcal{P})$. 

In Section \ref{se:rogers}, we will show that for a large class of functions, the averages over $\mathcal{L}(\mathcal{P},s)$
converge to the 
averages that are predicted by the averages of Haar-random module lattices $\ML_{t}(K)$.

\label{se:average_over_lattice}

This setup can be thought of as a higher moment analogue for general number fields of the following first moment result, which could be attributed to Rogers \cite{Rog47} as seemingly the first to consider an effective version of Siegel's mean value theorem.
\begin{theorem}
{\bf (Rogers, 1947) }
\label{th:rogers}

Let $p$ be an arbitrary prime, $\mathbb{F}_p$ be the field with $p$ elements and let $\pi_p:\mathbb{Z}^{d} \rightarrow \mathbb{F}_p^{d}$ be the natural coordinate-wise projection map.  Let $\mathcal{L}_p$ be the set of sub-lattices of $\mathbb{Z}^{d}$ that are pre-images of one-dimensional subspaces via this projection map, scaled to become unit covolume, i.e. $$\mathcal{L}_p = \{ C_p \pi_p^{-1}( \mathbb{F}_p v) \mid v \in \mathbb{F}_p^d \setminus \{ 0\} \}\text{ with } C_p = p^{-\left(1-\frac{1}{d}\right)} .$$

Consider a compactly supported continuous function $f:\mathbb{R}^{d} \rightarrow \mathbb{R}$. Then the following holds:
\begin{align}
\lim_{ p \rightarrow \infty}\left[ \frac{1}{\card {\mathcal{L}}_p}\sum_{ \Lambda \in {\mathcal{L}}_p} \sum_{v\in\Lambda\setminus\{0\}} f(v) \right] = \int_{\mathbb{R}^{d}} f(x) dx.
\end{align}
\end{theorem}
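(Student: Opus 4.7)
The plan is to swap the order of summation and to count, for each fixed $w \in \mathbb{R}^d \setminus \{0\}$, the number of lattices $\Lambda \in \mathcal{L}_p$ that contain $w$. Since one-dimensional $\mathbb{F}_p$-subspaces of $\mathbb{F}_p^d$ are parametrized by $\mathbb{P}^{d-1}(\mathbb{F}_p)$, we have $\card \mathcal{L}_p = (p^d-1)/(p-1) \sim p^{d-1}$. The containment $w \in C_p \pi_p^{-1}(\mathbb{F}_p v)$ forces $w \in C_p \mathbb{Z}^d$, and then $\pi_p(C_p^{-1}w)$ is either zero---exactly when $w \in p C_p \mathbb{Z}^d = p^{1/d}\mathbb{Z}^d$, in which case $w$ lies in every $\Lambda \in \mathcal{L}_p$---or nonzero, in which case it pins down the line $[v]$ uniquely and contributes $1$.

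This dichotomy yields
\begin{equation*}
\frac{1}{\card \mathcal{L}_p}\sum_{\Lambda \in \mathcal{L}_p}\sum_{w \in \Lambda\setminus\{0\}} f(w) = \sum_{w \in p^{1/d}\mathbb{Z}^d\setminus\{0\}} f(w) + \frac{1}{\card \mathcal{L}_p}\sum_{w \in C_p \mathbb{Z}^d \setminus p^{1/d}\mathbb{Z}^d} f(w),
\end{equation*}
and I would handle the two pieces separately. Because $f$ has compact support and $p^{1/d} \to \infty$, the first sum is identically zero for $p$ large enough. For the second, I would rewrite it as $\sum_{w \in C_p\mathbb{Z}^d} f(w) - \sum_{w \in p^{1/d}\mathbb{Z}^d} f(w)$; the subtracted piece collapses to $f(0)$ for $p$ large, so after division by $\card \mathcal{L}_p$ it contributes a vanishing error.

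The heart of the argument is the remaining Riemann sum. Since $C_p = p^{-1+1/d} \to 0$ and $f$ is continuous with compact support, $\sum_{w \in C_p\mathbb{Z}^d} f(w)$ is a Riemann sum on the shrinking lattice $C_p\mathbb{Z}^d$ of covolume $C_p^d = p^{-(d-1)}$, and so tends to $C_p^{-d}\int_{\mathbb{R}^d} f(x)\,dx = p^{d-1}\int_{\mathbb{R}^d} f(x)\,dx$. Dividing by $\card \mathcal{L}_p \sim p^{d-1}$ produces the stated limit $\int_{\mathbb{R}^d} f(x)\,dx$.

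The only real subtlety is a quantitative control on the Riemann-sum error: this is where continuity of $f$ (rather than mere integrability) is used, and the error can be bounded by the modulus of continuity of $f$ times the Lebesgue measure of an enlargement of its support, which is negligible compared to the main term of order $p^{d-1}$. The cleanly separated ``generic'' and ``degenerate'' mod-$p$ regimes are the first-moment shadow of the higher-moment Rogers-type formula that Section \ref{se:rogers} upgrades to module lattices over number fields, and I expect this bookkeeping---rather than any hard analytic step---to be the main point of care when generalizing.
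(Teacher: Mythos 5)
Your argument is correct: the exchange of summation, the exact count that a nonzero residue class pins down a unique line while $w\in p^{1/d}\mathbb{Z}^{d}$ lies in every lattice, and the Riemann-sum limit for $\sum_{w\in C_p\mathbb{Z}^d} f(w)$ after dividing by $\card\mathcal{L}_p\sim p^{d-1}$ all check out. The paper only cites Rogers for this statement rather than proving it, but your proof is exactly the first-moment, $K=\mathbb{Q}$, $s=1$ specialization of the paper's proof of Theorem \ref{th:higher_moments}: your degenerate case $w\in p^{1/d}\mathbb{Z}^{d}$ being pushed outside the support of $f$ is Lemma \ref{le:rankdrop}, your unique-line count is the exact form of Lemma \ref{le:counting}, and your Riemann-sum step is the role played by Proposition \ref{pr:convergence}.
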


Theorem \ref{th:rogers} can for instance be used to effectively generate lattices that attain the Minkowski lower bound on the sphere packing density. Results along the line of Theorem \ref{th:rogers} have appeared \cite{M16,GS21} for mean values of lattices with additional structure and with applications in mind.

\par

\subsection{Class of test functions}
  \label{se:test_funcs}

  The functions that are of interest in this theory are compactly supported continuous functions and functions that are indicators of sets with nice boundaries. The following class of functions contains both of these cases and will allow us to prove the relevant generalizations of Theorem \ref{th:rogers}.

  \begin{hypothesis} We call a test function
$g: \mathbb{R}^{d} \rightarrow \mathbb{R}$ 
``admissible'' if it is a 
compactly supported measurable function such that 
the error function 
\begin{equation}
  E(x,r) = \sup_{\|x-y\| \leq r} |g(x)-g(y)|
\end{equation}
satisfies for some $C>0$ and every $r>0$ small enough
\begin{equation}
  \int_{V} E(x,r) dx \leq C \cdot r,
\end{equation}
for any real subspace $V \subseteq \mathbb{R}^{d}$, $V \neq \{ 0 \}$. The integration is happening with the induced Lebesgue 
measure from the inclusion $V \subseteq \mathbb{R}^{d}$ and $C>0$ is required not to depend on $V$. 
\label{hy:admisible}
\end{hypothesis}

\section{Rogers integral formula using lifts of codes}
  \label{se:rogers}

For this section, let us assume that the number field $K$ is fixed. 
We are then interested in the average of lattice sum functions
\begin{equation}
\frac{1}{\card \mathcal{L}(\mathcal{P},s)}
\sum_{ \Lambda \in \mathcal{L}(\mathcal{P},s)} 
\left(\sum_{v \in \Lambda} f(v)\right)^{n}  = 
\frac{1}{\card \mathcal{L}(\mathcal{P},s)} 
\sum_{ \Lambda \in \mathcal{L}(\mathcal{P},s)} 
\left( \sum_{v \in \Lambda^{n}} g(v)\right),
\label{eq:nthmoment}
\end{equation}
where $g(v_1,\dots,v_n)= f(v_1)f(v_2)\dots f(v_n)$.
We perform some manipulations on this sum. Letting $\ind(P)$ denote the indicator function of a proposition $P$, we have that 
\begin{align}
& 
\frac{1}{\card \mathcal{L}(\mathcal{P},s)} \sum_{\Lambda \in \mathcal{L}(\mathcal{P},s)}\left( \sum_{v \in \Lambda^{n}} g(v)\right) 
\\
& = 
\sum_{x \in \mathcal{O}_K^{t \times n} }
g( \beta x)
\left(
\frac{1}{\card \mathcal{L}(\mathcal{P},s)} 
\sum_{\substack{  S \subseteq k_\mathcal{P}^{t} \\  S \simeq k_{\mathcal{P}}^{s} } } 
\ind\left(
\spant
( \pi_{\mathcal{P}}(x_1),\dots,\pi_{\mathcal{P}}(x_n)) \subseteq S
\right) \right). \\
\label{eq:prepoission}
\end{align}

The inner sum is just the probability of a random subspace $S \subseteq k_\mathcal{P}^{t}$ of fixed dimension $s$ containing some given set of points $x_1,x_2,\dots,x_n \in k_\mathcal{P}^{t}$. This probability, other than depending on $\mathcal{P},s$, depends only on the $k_\mathcal{P}$-dimension of the subspace generated by $\pi_\mathcal{P}(x_1),\dots,\pi_\mathcal{P}(x_n)$. This dimension equals the rank of $\pi_\mathcal{P}(x) \in M_{ t \times n }(k_\mathcal{P})$ which is certainly less than the rank of $x\in M_{t \times n}(\mathcal{O}_K) \subseteq M_{t \times n }(K)$. So we can split our sum into 
\begin{align}
& = 
\sum_{m=0}^{\min(n,t)}\sum_{\substack{x \in M_{t \times n }(\mathcal{O}_K) \\ \rank(x) = m}}
\frac{  g( \beta x)}{\card \mathcal{L}(\mathcal{P},s)} 
\left(
\sum_{\substack{  S \subseteq k_\mathcal{P}^{t} \\  S \simeq k_{\mathcal{P}}^{s} } } 
\ind\left(
\spant
( \pi_{\mathcal{P}}(x_1),\dots,\pi_{\mathcal{P}}(x_n)) \subseteq S
\right) \right). 
\label{eq:not_even_the_final_form}
\end{align}

Given $x \in M_{t \times n }(\mathcal{O}_K)$, we might for some $\mathcal{P}$ encounter a ``rank-drop'' phenomenon, that is $\rank\left(\pi_\mathcal{P}(x)\right) < \rank(x)$. However, the good news is that the matrices $x$ where this rank-drop happens can be ``pushed away'' from the support of $g$.

\begin{lemma}
Suppose that $x \in M_{t \times n }\left(\mathcal{O}_{K}\right)$ is a matrix with $\rank(x) = m>0$ and $\mathcal{P}$ is a prime ideal in $\mathcal{O}_K$ such that $\rank(\pi_\mathcal{P}(x)) < m$. Then, for any Euclidean norm $\| \bullet \|: M_{t \times n }(K_\mathbb{R})  \rightarrow \mathbb{R}_{\ge 0}$, we can find a constant $C>0$ depending on $K,\|\bullet\|,t,n$ and independent of $m,\mathcal{P}$ such that 
\begin{equation}
\|x\| \ge C \N(\mathcal{P})^{\frac{1}{m[K:\mathbb{Q}]}}
\end{equation}
\label{le:rankdrop}
\end{lemma}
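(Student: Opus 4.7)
The plan is to extract from the rank-drop condition a single non-zero algebraic integer --- namely an appropriately chosen $m \times m$ minor of $x$ --- and then squeeze its length in $K_\mathbb{R}$ between a lower bound in terms of $\N(\mathcal{P})$ and an upper bound in terms of $\|x\|^m$. Passing between these two bounds yields the claimed inequality up to an explicit constant.

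\textbf{Step 1: Extract a distinguished minor.} Since $\rank(x) = m > 0$ over $K$, I would first pick an $m$-element set $I$ of rows and an $m$-element set $J$ of columns such that $D := \det(x_{I,J})$ is a non-zero element of $\mathcal{O}_K$. The hypothesis $\rank(\pi_\mathcal{P}(x)) < m$ says that every $m \times m$ minor of $x$ vanishes modulo $\mathcal{P}$; in particular $D \in \mathcal{P}$. Consequently the principal ideal $(D)$ is contained in $\mathcal{P}$, so $|\N_{K/\mathbb{Q}}(D)| \geq \N(\mathcal{P})$.

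\textbf{Step 2: Lower bound $\|D\|_{K_\mathbb{R}}$.} Viewing $D \in \mathcal{O}_K \subseteq K_\mathbb{R}$ under the usual archimedean embedding, I would apply the arithmetic--geometric mean inequality to the quantities $|\sigma(D)|^2$ (weighted appropriately at complex places) to deduce, from the definition \eqref{eq:norm} of the inner product, that
\begin{equation*}
    \|D\|_{K_\mathbb{R}}^{2} \;=\; |\Delta_K|^{-\frac{2}{[K:\mathbb{Q}]}} \sum_{\sigma} |\sigma(D)|^{2} \;\geq\; [K:\mathbb{Q}] \cdot |\Delta_K|^{-\frac{2}{[K:\mathbb{Q}]}} \cdot |\N(D)|^{\frac{2}{[K:\mathbb{Q}]}}.
\end{equation*}
Combined with Step 1, this gives $\|D\|_{K_\mathbb{R}} \geq C_1 \cdot \N(\mathcal{P})^{1/[K:\mathbb{Q}]}$ for a constant $C_1$ depending only on $K$.

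\textbf{Step 3: Upper bound $\|D\|_{K_\mathbb{R}}$ in terms of $\|x\|$.} Since multiplication in $K_\mathbb{R}$ is component-wise at each archimedean place, one has $\sigma(D) = \det(\sigma(x_{I,J}))$ for every $\sigma$. Hadamard's inequality (or a crude expansion over $\mathfrak{S}_m$) then bounds each $|\sigma(D)|$ by $m^{m/2}$ times a product of $m$ entries $|\sigma(x_{ij})|$. Using the equivalence of any two Euclidean norms on the finite-dimensional space $M_{t \times n}(K_\mathbb{R})$ and summing over the finitely many choices of $I, J$, I would obtain an inequality of the shape $\|D\|_{K_\mathbb{R}} \leq C_2(m) \cdot \|x\|^{m}$ with $C_2(m)$ depending only on $K, \|\bullet\|, t, n, m$.

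\textbf{Step 4: Combine and uniformize in $m$.} Putting Steps 2 and 3 together gives $\|x\|^m \geq (C_1/C_2(m)) \cdot \N(\mathcal{P})^{1/[K:\mathbb{Q}]}$, i.e.\ $\|x\| \geq C(m) \cdot \N(\mathcal{P})^{1/(m[K:\mathbb{Q}])}$. Because $m$ ranges over the finite set $\{1, 2, \dots, \min(t,n)\}$, I would finally set $C := \min_{m} C(m) > 0$, which depends only on $K, \|\bullet\|, t, n$ and is independent of $m$ and $\mathcal{P}$, as required.

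The argument is essentially mechanical once one has the key idea of Step~1; the only point requiring care is the bookkeeping in Step~4 to ensure that the $m$-th-root constant $C(m)$ does not degenerate as $m$ varies, which is automatic since $m$ is drawn from a fixed finite range determined by $t$ and $n$.
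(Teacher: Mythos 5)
Your proposal is correct and follows essentially the same route as the paper: both arguments hinge on extracting a non-singular $m\times m$ minor whose determinant is a nonzero element of $\mathcal{P}$, so that $|\N_{K/\mathbb{Q}}(\det)|\ge \N(\mathcal{P})$, and then converting this norm lower bound into a lower bound on the entries of $x$ via a Hadamard-type determinant estimate, with the finiteness of the range $m\le\min(t,n)$ making the constant uniform. The only cosmetic difference is that the paper passes through the integral regular representation $\iota:\OK\hookrightarrow M_{[K:\mathbb{Q}]}(\mathbb{Z})$ and bounds an entry of the integer matrix $\iota(a)$, whereas you stay at the archimedean embeddings and use AM--GM together with Hadamard on each component of $K_\mathbb{R}$.
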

\begin{proof}
By choosing a $\mathbb{Z}$-basis of $\OK$, we can embed $\iota:\mathcal{O}_K \hookrightarrow M_{[K:\mathbb{Q}]}(\mathbb{Z})$ as a subring of the square integer matrices of size $[K:\mathbb{Q}]$. 
Without loss of generality, we assume that the norm $\|\bullet \|$ is the Euclidean norm via the embedding 
$$\iota: M_{t \times n }(\OK)\hookrightarrow M_{t[K:\mathbb{Q}] \times n[K:\mathbb{Q}]}(\mathbb{Z}) \subseteq \mathbb{R}^{tn[K:\mathbb{Q}]^{2}}.$$ 
Since $\rank(x)=m$, we know that there exists a non-singular $m \times m$ minor $a \in M_{m}(\OK)$ appearing as a submatrix in $x$. It is clear that $ 0 \neq \det a \in \mathcal{P}$ otherwise there is no rank-drop modulo $\mathcal{P}$. Therefore, we get that 
$\N(\mathcal{P}) \mid \N(\det a).$
Since we know that $0 \neq |\det(\iota(a))| \ge \N(\mathcal{P})$, at least one non-zero integer appearing in the matrix entries of $\iota(a)$ would have absolute value $\gg \N(\mathcal{P})^{\frac{1}{m[K:\mathbb{Q}]}}$. This produces the same lower bound on the Euclidean norm of $\iota(a)$ up to a constant, and similarly also for $\iota(x)$.
\end{proof}

\begin{lemma}
\label{le:counting}
Suppose $y_1, y_2,\dots,y_m \in k_{\mathcal{P}}^{t}$ are linearly independent vectors (over $k_{\mathcal{P}}$). Then the following holds:
{\small\begin{equation}
\frac{  1}{\card \mathcal{L}(\mathcal{P},s)} 
\left(
\sum_{\substack{  S \subseteq k_\mathcal{P}^{t} \\  S \simeq k_{\mathcal{P}}^{s} } } 
\ind\left(
\spant
( y_1,y_2,\dots,y_m) \subseteq S
\right) \right) = \begin{cases}
0 &  \text{ if $s<m$}\\
\N(\mathcal{P})^{-m(t-s)}\cdot (1+o(1)) &  \text{ if $s \geq m$.}
\end{cases}
\end{equation}}
Here the error term $o(1)$ is with respect to growing norm $\N(\mathcal{P})$.
\end{lemma}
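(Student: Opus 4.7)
The plan is to reduce the statement to a classical count of subspaces over the finite field $k_\mathcal{P}$, after which the asymptotic becomes a direct calculation with Gaussian binomial coefficients. Write $q = \N(\mathcal{P})$. Since $\pi_\mathcal{P}:\OK^t\to k_\mathcal{P}^t$ is surjective, distinct subspaces $S\subseteq k_\mathcal{P}^t$ yield distinct preimages, so $\card \mathcal{L}(\mathcal{P},s)$ is exactly the number of $s$-dimensional subspaces of $k_\mathcal{P}^t$, namely the Gaussian binomial coefficient $\binom{t}{s}_q$.

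For the numerator, let $W = \operatorname{span}_{k_\mathcal{P}}(y_1,\ldots,y_m)$, which by hypothesis has dimension $m$. The condition $\operatorname{span}(y_1,\ldots,y_m)\subseteq S$ is simply $W\subseteq S$. If $s<m$ this is vacuous and the count is $0$, establishing the first case. If $s\geq m$, the $s$-dimensional subspaces containing $W$ are in bijection with the $(s-m)$-dimensional subspaces of the quotient $k_\mathcal{P}^t/W\simeq k_\mathcal{P}^{t-m}$, so their number is $\binom{t-m}{s-m}_q$. Thus the quantity in question equals
\begin{equation}
\frac{\binom{t-m}{s-m}_q}{\binom{t}{s}_q}.
\end{equation}

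It remains to extract the leading asymptotic as $q\to\infty$. Using the product formula
\begin{equation}
\binom{n}{k}_q = \prod_{i=0}^{k-1}\frac{q^{n-i}-1}{q^{k-i}-1} = q^{k(n-k)} \prod_{i=0}^{k-1}\frac{1-q^{-(n-i)}}{1-q^{-(k-i)}},
\end{equation}
each factor in the trailing product is $1+O(q^{-1})$, so $\binom{n}{k}_q = q^{k(n-k)}(1+O(q^{-1}))$ with implicit constant depending only on $n,k$. Applying this to both the numerator and denominator with $(n,k) = (t-m,s-m)$ and $(t,s)$ respectively, the exponents combine as $(s-m)(t-s) - s(t-s) = -m(t-s)$, and the error terms combine multiplicatively into $1+O(q^{-1})$. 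Since $q = \N(\mathcal{P})\to\infty$, this error term is $o(1)$, yielding the claimed formula.

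There is no real obstacle here; the argument is essentially bookkeeping with the Gaussian binomial. The only minor point to be careful about is that the implied constants in the $O(q^{-1})$ error depend on the fixed parameters $m,s,t$ but not on $\mathcal{P}$, so that the expansion is uniform in $\mathcal{P}$ as $\N(\mathcal{P})\to\infty$; this is immediate from the finite number of factors in the product formula.
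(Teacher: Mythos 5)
Your proof is correct and follows essentially the same route as the paper: identify the $s$-dimensional subspaces containing the fixed $m$-dimensional span with $(s-m)$-dimensional subspaces of the $(t-m)$-dimensional quotient, and compare the two Grassmannian cardinalities, each of which is $q^{k(n-k)}(1+o(1))$ as $q=\N(\mathcal{P})\to\infty$. Your added remark on uniformity of the $O(q^{-1})$ constants in the fixed parameters $m,s,t$ is a sensible bit of bookkeeping but does not change the argument.
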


\begin{theorem}
\label{th:higher_moments}
Take $t \ge 2$, $n \in \{1,\dots,t-1\}$ and choose $s$ as either $t-1$, or any number in $\{n,n+1,\dots,t-1\}$ that satisfies
\begin{equation}
1-\frac{s}{t} < \frac{1}{n}.
\end{equation}

Let $g:K_{\mathbb{R}}^{t \times n} \rightarrow \mathbb{R}$ be a function satisfying Hypothesis \ref{hy:admisible}. With $\mathcal{L}(\mathcal{P},s)$ defined as in (\ref{eq:def_of_L}), we have that as $\N(\mathcal{P}) \rightarrow \infty$
{\small \begin{equation}
\frac{1}{\card \mathcal{L}(\mathcal{P},s)} 
\sum_{ \Lambda \in \mathcal{L}(\mathcal{P},s)} 
\left( \sum_{v \in \Lambda^{n}} g(v)\right)
\rightarrow
\sum_{m=0}^{{n}}\sum_{\substack{D \in M_{m \times n }(K) \\ 
\rank(D) = m \\ 
D \text{ row reduced echelon}}}
\mathfrak{D}(D)^{-t} \int_{x \in K_\mathbb{R}^{t \times m }} g(x D ) dx,
\label{eq:right_side_converge}
\end{equation}}
where $\mathfrak{D}(D)$ is the index of the sublattice $\{ C \in M_{1 \times m }(\mathcal{O}_K) \mid C \cdot D \in M_{1 \times n}(\mathcal{O}_K)\}$ in $M_{1 \times m}(\mathcal{O}_K)$. 
Here the right hand side is known to converge for $t>n$. See \cite{GSV23}.
\end{theorem}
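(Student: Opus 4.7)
The plan is to refine the decomposition (\ref{eq:not_even_the_final_form}) by combining the two preceding lemmas with a Riemann-sum argument. For each rank stratum $m$, split the inner sum over $x \in M_{t \times n}(\OK)$ with $\rank(x) = m$ according to whether $\rank(\pi_{\mathcal{P}}(x)) = m$ or $\rank(\pi_{\mathcal{P}}(x)) < m$. On the rank-drop subsum, Lemma \ref{le:rankdrop} gives $\|x\| \gg \N(\mathcal{P})^{1/(m \deg K)}$, hence
\begin{equation*}
\|\beta x\| \gg \N(\mathcal{P})^{(1/m - (1 - s/t))/\deg K}.
\end{equation*}
The hypothesis $1 - s/t < 1/n \leq 1/m$ makes this exponent strictly positive, so $\beta x$ exits the compact support of $g$ once $\N(\mathcal{P})$ is large enough, and the rank-drop contribution vanishes in the limit.

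On the generic subsum, Lemma \ref{le:counting} replaces the inner counting over subspaces by the factor $\N(\mathcal{P})^{-m(t-s)}(1 + o(1))$, valid since $s \geq n \geq m$. To process the remaining sum, uniquely write each rank-$m$ matrix $x$ as $x = yD$ where $D \in M_{m \times n}(K)$ is the row-reduced echelon form of the $K$-row span of $x$ and $y \in M_{t \times m}(K)$ is determined accordingly. The condition $x \in M_{t \times n}(\OK)$ then amounts to requiring that each row of $y$ lie in the lattice
\begin{equation*}
L_D := \{C \in K^{1 \times m} : CD \in M_{1 \times n}(\OK)\}.
\end{equation*}
The row-reduced echelon structure, via the identity pattern on pivot columns, forces $L_D \subseteq M_{1 \times m}(\OK)$ with index exactly $\mathfrak{D}(D)$; since our normalization makes $M_{1 \times m}(\OK) = \OK^m$ of unit covolume in $\KR^m$, the covolume of $L_D$ equals $\mathfrak{D}(D)$.

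For each fixed $D$, substituting $u = \beta y$ rewrites the resulting sum as a Riemann sum over the lattice $\beta L_D^{t} \subseteq \KR^{tm}$, which has covolume $\beta^{tm \deg K} \mathfrak{D}(D)^{t}$. Since $\beta = \N(\mathcal{P})^{-(1 - s/t)/\deg K}$, one has $\beta^{-tm \deg K} = \N(\mathcal{P})^{m(t - s)}$, exactly cancelling the prefactor from Lemma \ref{le:counting}. In the limit, each $D$ contributes $\mathfrak{D}(D)^{-t} \int_{\KR^{tm}} g(uD)\, du$; the rank-deficient stratum $\rank(y) < m$ lies on a measure-zero subvariety of $L_D^t$ and contributes negligibly.

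The main obstacle is justifying the exchange of the limit $\N(\mathcal{P}) \to \infty$ with the infinite sum over $D$. Hypothesis \ref{hy:admisible}, applied with the subspace $V = \{uD : u \in \KR^{tm}\} \subseteq \KR^{tn}$, controls the boundary oscillation in each Riemann approximation uniformly in $V$ by a constant times the mesh size; truncating to $\mathfrak{D}(D) \leq T$ then yields uniform convergence for finitely many $D$. The tail $\mathfrak{D}(D) > T$ is controlled by the absolute convergence of the right-hand side of (\ref{eq:right_side_converge}) proved in \cite{GSV23}. Letting $\N(\mathcal{P}) \to \infty$ and then $T \to \infty$ gives the claimed identity.
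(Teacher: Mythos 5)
Your reduction of the theorem to a statement about lattice sums is the same as the paper's: you use Lemma \ref{le:rankdrop} together with the exponent condition to discard the rank-drop matrices, and Lemma \ref{le:counting} to replace the subspace count by $\N(\mathcal{P})^{-m(t-s)}=\beta^{mtd}$, arriving at $\sum_m\sum_{\rank x=m}g(\beta x)\beta^{mtd}(1+o(1))$. At that point the paper invokes Proposition \ref{pr:convergence}, and this is exactly where your proposal has a genuine gap: the exchange of $\beta\rightarrow 0$ with the infinite sum over $D$ is the real content of the theorem, and your truncation argument does not deliver it. First, a concrete flaw: the set $\{D\in\mathcal{R}_{m,n}(K):\mathfrak{D}(D)\leq T\}$ is infinite (already for $m=1$, every $D=(1,a)$ with $a\in\OK$ has $\mathfrak{D}(D)=1$), so "finitely many $D$" fails; the correct truncation parameter is the height $H(\Lambda_D)$, which also accounts for the size of the entries of $D$. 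Second, and more fundamentally, the tail is a tail of \emph{lattice-point sums at finite $\beta$}, not of the limiting integrals, so the absolute convergence of the right-hand side proved in \cite{GSV23} does not control it. One needs a bound, uniform in $\beta$, on $\beta^{mtd}\sum_{A\in M_t(\Lambda_D),\,\|A\|\ll 1/\beta}|g(\beta A)|$ summed over all contributing $D$, and your "constant times the mesh size" estimate from Hypothesis \ref{hy:admisible} is not uniform in $D$: the Riemann-sum error for $\Lambda_D$ is of size $\beta\, c(\Lambda_D)/H(D)^{t}$, and the covering radius $c(\Lambda_D)$ can be enormous for skew lattices, for which the point count in the ball of radius $1/\beta$ can greatly exceed the volume heuristic $\beta^{-mtd}H(D)^{-t}$. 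Handling this requires restricting a priori to $H(D)\ll\beta^{-md}$ (Lemma \ref{le:lower_bound_on_C}), Schmidt's counting theorem \ref{th:schmidt}, and the well-rounded/skew dichotomy with successive $K$-minima and summation by parts (Lemmas \ref{le:props_of_minima} and \ref{le:domain_dimension_bound}), where the hypothesis $t>n$ is used in an essential way; none of this is replaced by anything in your argument.

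A second, related gap is your treatment of the rank-deficient stratum. Saying that $\{\rank(y)<m\}$ is a measure-zero subvariety is a statement about the integral, not about the lattice sum; moreover, a single lower-rank matrix $v$ lies in $M_t(\Lambda_D)$ for \emph{many} different $D\in\mathcal{R}^{\beta}_{m,n}(K)$, so when you complete each $D$-sum to a full Riemann sum you introduce an overcount whose total size must be bounded by estimating the multiplicity $n^{\beta}_{m,n}(D',K)$, as in Lemma \ref{lemma1} and the computation of Section \ref{se:lower_rank}. For the finitely many $D$ of bounded height this contribution does vanish for each fixed $D$ as $\beta\rightarrow 0$, but in the regime of large height (which your truncation fails to isolate correctly) it must be estimated quantitatively. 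In short, your proposal correctly reproduces the paper's reduction to the lattice-sum statement, but the substitute you offer for Proposition \ref{pr:convergence} assumes precisely the uniform tail and skew-lattice estimates that constitute the proof's main difficulty.
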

\begin{proof}
From the discussion above, we arrive at (\ref{eq:not_even_the_final_form}), and it remains to consider
\begin{align}
\sum_{m=0}^{n}\sum_{\substack{x \in M_{t \times n }(\mathcal{O}_K) \\ \rank(x) = m}}
\frac{  g( \beta x)}{\card \mathcal{L}(\mathcal{P},s)} 
\left(
\sum_{\substack{  S \subseteq k_\mathcal{P}^{t} \\  S \simeq k_{\mathcal{P}}^{s} } } 
\ind\left(
\spant
( \pi_{\mathcal{P}}(x_1),\dots,\pi_{\mathcal{P}}(x_n)) \subseteq S
\right) \right).
\end{align}
Note that here $\beta = \beta(\mathcal{P},s) = \N(\mathcal{P})^{-\left(1-\frac{s}{t}\right)\frac{1}{[K:\mathbb{Q}]}}$. The rank $m$ ranges within $\{0,1,\dots,n\}$ since $\min(n,t)=n$. Also, since $s \ge n$, we expect the quantity in parentheses to be nonzero.\par
We recall that $M_{t \times n }(K_{\mathbb{R}})$ has the Euclidean measure given by $t \cdot  n$ copies of the quadratic form coming from (\ref{eq:norm}).
When $m>1$, we know from Lemma \ref{le:rankdrop} that we will encounter a rank-drop mod $\mathcal{P}$ only if for some predetermined constant $C>0$
\begin{align}
& \| x \|  \ge C \N(\mathcal{P})^{\frac{1}{m[K:\mathbb{Q}]}}  \\
\Rightarrow &  \|\beta x\| \ge C \N(\mathcal{P})^{\frac{1}{[K:\mathbb{Q}]} \cdot \left( \frac{1}{m} - \left(1 - \frac{s}{t}\right)\right)}.
\end{align}
Since 
\begin{equation}
\frac{1}{m} - \left(1-\frac{s}{t} \right) \ge  \frac{1}{n} - \left(1- \frac{s}{t}\right) > 0,
\end{equation}
for a large enough value of $\N(\mathcal{P})$ we have that all the matrices of $x \in M_{t  \times n }\left(\mathcal{O}_K\right)$ where rank-drop could happen are outside the support of $g$. Let us assume that $\N(\mathcal{P})$ is large enough for this to hold. Hence whenever $g(\beta x)$ is non-zero, the span of $ \pi_\mathcal{P}(x_1) , \pi_\mathcal{P}(x_2) ,\dots,  \pi_\mathcal{P}(x_n) $ is of the same $k_\mathcal{P}$-dimension as the rank of $x$. 
Using Lemma \ref{le:counting}, we can rewrite our sum as
{\small\begin{align}
\sum_{m=0}^{{n}}\sum_{\substack{x \in M_{t \times n }(\mathcal{O}_K) \\ \rank(x) = m}}
\frac{  g( \beta x)}{ \N(\mathcal{P})^{m(t-s)}} \cdot (1+o(1))
= \sum_{m=0}^{{n}}\sum_{\substack{x \in M_{t \times n }(\mathcal{O}_K) \\ \rank(x) = m}}
{  g( \beta x)}{ \beta^{mt[K:\mathbb{Q}]} } \cdot (1+o(1)).
\end{align}}

The result follows as $\beta \rightarrow 0$ from this last step together with Proposition \ref{pr:convergence}.
\end{proof}

\begin{remark}
Observe that the right hand side of (\ref{eq:right_side_converge})
is identical to Rogers' integration formula \cite{K19, hughes2023mean, GSV23}. This is no accident: on the one hand, the integral formula holds for a lattice taken at random on any fixed component of $\ML_t(K)$, see e.g. \cite[Remark 7]{GSV23}. 

On the other hand, the lattices in $\mathcal{L}(\mathcal{P},s)$ lie in the same connected component of $\ML_t(K)$ for a fixed $\mathcal{P}$ and $s$. 
As $\N(\mathcal{P}) \rightarrow \infty$ it is well known that the ideal classes $[\mathcal{P}]$ equidistribute in the class group, so that, fixing the codimension of the algebraic codes by setting e.g. $s=t-1$, the sets $\mathcal{L}(\mathcal{P},s)$ equidistribute among the connected components of $\ML_t(K)$ and approach the integral formula on each component.

Finally, equidistribution results for Hecke points as in \cite{EOL01} should imply the equidistribution of $\mathcal{L}(\mathcal{P},s)$ as $\N(\mathcal{P}) \rightarrow \infty$, so that our results would give a new proof of the (continuous) Rogers integral formula and vice-versa.
\end{remark}

\begin{remark}
One can find the rate of convergence from Proposition \ref{pr:convergence}.
\end{remark}

\subsection{Matrices of fixed rank}

\begin{definition}
For notational simplicity, let $\mathcal{R}_{m,n}(K)\subseteq M_{m \times n}(K)$ denote the set of row-reduced echelon form matrices of $K$-rank $m$. 
\end{definition}
The rest of this section is devoted to the proof of Proposition \ref{pr:convergence} below, which will establish Theorems \ref{thm:KatznelsonOK} and \ref{th:higher_moments}. As mentioned in the introduction, this is very similar to the work done by Katznelson in \cite{K1994}. The proof basically extends the same approach to number fields. \par
Throughout this section, we let $g:M_{t \times n}(K_\mathbb{R}) \rightarrow \mathbb{R}$ denote an admissible function in the sense of Hypothesis \ref{hy:admisible} and write $\gg$ or $\ll$
to denote inequality with an implicit constant dependent on $K,n,m,t$ and the function $g$ only.

\begin{proposition}
\label{pr:convergence}
	Suppose $K$ is a number field with ring of integers $\OK$ and equip any number of copies of $K_\mathbb{R} = K \otimes \mathbb{R}$ with the trace norm normalized so that $\OK$-points have covolume 1. Let $d=[K:\mathbb{Q}]$ and let $g:M_{t \times n}(K_\mathbb{R}) \rightarrow \mathbb{R}$ be an admissible function in the sense of Hypothesis \ref{hy:admisible}. Then as $\beta \rightarrow 0$ we have that 
	\begin{equation}
	  \left| \sum_{\substack{A \in M_{t \times n}(\OK) \\ \rank(A)=m}} g(\beta A) \beta^{mtd}  -  \sum_{D \in \mathcal{R}_{m,n}(K)}  \frac{1}{\mathfrak{D}(D)^{t}}\int_{M_{t \times m}(K_\mathbb{R})} g(x D) dx\right| \ll \beta^\delta
	\end{equation}
	for any $0<\delta<1$, where the notations are as in Theorem \ref{th:higher_moments}.
\end{proposition}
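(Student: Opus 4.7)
The plan is to prove Proposition \ref{pr:convergence} by adapting Katznelson's strategy from \cite{K1994} from $K = \mathbb{Q}$ to an arbitrary number field. The starting point is the rank factorization: every matrix $A \in M_{t \times n}(\OK)$ with $\rank(A) = m$ admits a unique decomposition $A = CD$, where $D \in \mathcal{R}_{m,n}(K)$ and $C$ lies in the lattice
\begin{equation}
L_D := \{ C' \in M_{t \times m}(\OK) : C' D \in M_{t \times n}(\OK) \}
\end{equation}
and has rank $m$. Since $L_D$ is a sublattice of $M_{t \times m}(\OK)$ of index $\mathfrak{D}(D)$, the left-hand side rewrites as
\begin{equation}
\sum_{\substack{A \in M_{t \times n}(\OK) \\ \rank A = m}} g(\beta A) = \sum_{D \in \mathcal{R}_{m,n}(K)} \sum_{\substack{C \in L_D \\ \rank C = m}} g(\beta C D).
\end{equation}
The bulk of the argument is then to compare, for each fixed $D$, the inner sum to its corresponding integral on the RHS of the proposition.

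For a fixed $D$ of rank $m$, the linear map $x \mapsto xD$ embeds $M_{t \times m}(K_\mathbb{R})$ as a real subspace $V_D \subseteq M_{t \times n}(K_\mathbb{R})$ of real dimension $mtd$, and the translates $\{ \beta C D : C \in L_D \}$ form a lattice in $V_D$. Hypothesis \ref{hy:admisible} is precisely tailored to apply on such subspaces, so a standard Riemann-sum bound (oscillation across fundamental cells integrated against $E(\cdot, r)$) together with a change of variables back to $M_{t \times m}(K_\mathbb{R})$ yields an estimate of the form
\begin{equation}
\left| \beta^{mtd} \sum_{C \in L_D} g(\beta C D) - \frac{1}{\mathfrak{D}(D)} \int_{M_{t \times m}(K_\mathbb{R})} g(xD) \, dx \right| \ll \frac{\beta \cdot \rho(D)}{\mathfrak{D}(D)},
\end{equation}
where $\rho(D)$ is a geometric quantity measuring the diameter of a fundamental parallelepiped of $L_D$ viewed through $x \mapsto xD$. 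A separate and easier estimate handles the rank-deficient $C$'s: they are confined to finitely many lower-dimensional subspaces and, since $g$ has compact support and $D$ has rank $m$, to a bounded region; a routine lattice-point count in codimension $\geq 1$ shows their aggregate contribution is of lower order in $\beta$.

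The final step is to aggregate the fixed-$D$ errors over all $D \in \mathcal{R}_{m,n}(K)$. The strategy is to split the sum at a denominator threshold $\mathfrak{D}(D) \sim \beta^{-a}$ with $a > 0$ to be optimized. On the tame piece $\{ \mathfrak{D}(D) \leq \beta^{-a} \}$, we add up the Riemann-sum errors using a Schanuel-type count of row-reduced echelon matrices in $M_{m \times n}(K)$ with prescribed denominator ideal and bounded operator norm; this is the number-field analogue of the $\mathbb{Q}$-case in \cite{K1994}, obtained using absolute Weil heights as in Section \ref{se:bounds_for_vectors}. On the tail $\{ \mathfrak{D}(D) > \beta^{-a} \}$, the integral side is bounded by $\sum_{\mathfrak{D}(D) > \beta^{-a}} \mathfrak{D}(D)^{-t}$ which decays polynomially because the full sum converges for $t > n$ by \cite{GSV23}, while the discrete side is bounded by a direct count of lattice points of $\beta L_D$ in the support of $g$ pulled back through $D$. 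Optimizing $a$ in $\beta$ produces the claimed error $O(\beta^\delta)$ for any $\delta < 1$.

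The main obstacle is this uniform counting of $D \in \mathcal{R}_{m,n}(K)$ with given denominator ideal together with the control of the geometric factor $\rho(D)$ appearing in the per-$D$ Riemann sum error. Extending Katznelson's inductive height-based argument on the columns of $D$ to a general $\OK$ must account for non-trivial class groups and mixed signatures, and these technicalities form the heart of the proof.
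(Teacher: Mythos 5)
Your overall architecture is the same as the paper's: rank factorization $A=CD$ over $D\in\mathcal{R}_{m,n}(K)$, a per-$D$ Riemann-sum versus integral comparison based on Hypothesis \ref{hy:admisible}, and a tail estimate using convergence of $\sum_D \mathfrak{D}(D)^{-t}$ for $t>n$. But the two steps you defer or dismiss are exactly where the proof lives, and as described they do not go through. First, your per-$D$ error $\ll \beta\,\rho(D)/\mathfrak{D}(D)$ (which should in any case carry the weight $\mathfrak{D}(D)^{-t}$, matching the statement, since $L_D$ has index $\mathfrak{D}(D)^{t}$ in $M_{t\times m}(\OK)$) involves the covering radius of the lattice $\Lambda_D$, and this geometric factor is \emph{not} controlled by the denominator $\mathfrak{D}(D)$ or the height $H(D)$ alone: among the roughly $\beta^{-mnd}$ relevant $D$ there are skew lattices whose covering radius is as large as $\beta^{-(1-\delta)}$ or more even though $H(D)$ is moderate. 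Hence splitting the aggregation at a threshold $\mathfrak{D}(D)\sim\beta^{-a}$ and invoking a Schmidt/Schanuel-type count only disposes of the well-rounded part; for the skew part the paper must reparametrize $D$ by the successive $K$-minima $l_1,\dots,l_m$ of $\Lambda_D$, exploit the near-orthogonality $\|\pi_i(l_j)\|\ll\|l_i\|$ of Lemma \ref{le:props_of_minima}, and run nested summation by parts via Lemma \ref{le:domain_dimension_bound}. You explicitly flag this as the main obstacle but supply no argument, so the claimed bound $O(\beta^{\delta})$ is not established by your plan.

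Second, the rank-deficient $C$'s are not a routine per-$D$ lower-order term. A matrix $v$ with $\rank v=s<m$ lies in $M_t(\Lambda_D)$ for very many distinct $D\in\mathcal{R}^{\beta}_{m,n}(K)$, and since the number of relevant $D$ grows polynomially in $1/\beta$, any per-$D$ estimate must be multiplied by this multiplicity before one can conclude. The paper's treatment in Section \ref{se:lower_rank} interchanges the order of summation, bounds the multiplicity by $n^{\beta}_{m,n}(D',K)\ll \beta^{-d(n-s)(m-s)}H(D')^{m-s}$ (Lemma \ref{lemma1}), and only after exponent bookkeeping that ends with $m(t-n)\geq m$ — so the hypothesis $t>n$ is used again here, and the margin is thin when $t$ barely exceeds $n$ — does the total contribution come out as $\ll\beta^{2d}\log(1/\beta)$. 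Your one-sentence dismissal of this term as a codimension-one lattice count leaves a genuine gap.
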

\begin{remark}
\label{re:rate_of_convergence}
It should be possible to improve the error term to $O(\beta)$, however this was not the focus for our current work.
\end{remark}

We recall that the right hand side of the sum in Proposition \ref{pr:convergence} is
finite as long as we are in the regime $t>n$. Let us for the remainder of this section fix a $\mathbb{Z}$-basis of $\OK$, noting that the implicit constants might depend on this choice of $\mathbb{Z}$-basis. We consider the set 
\begin{align}
	V_{t,n,m}(\mathcal{O}_K) & = \{ x \in M_{t \times n}(\OK) \mid \rank x = m  \} \\
	 & = \bigsqcup_{\substack{ D \in \mathcal{R}_{m,n}(K)}} 
 \{  CD \mid C \in M_{t \times m}(\OK) ,   CD \in M_{m \times n}(\mathcal{O}_K) , \rank (C) = m\}.
\end{align}

\begin{definition}
Let $D \in \mathcal{R}_{m,n}(K)$.
Define
\begin{align}
\Lambda_D &  = ( M_{1 \times m}(K)\cdot D ) \cap M_{1 \times n}(\OK).
\end{align}
Hence $\Lambda_D$ is a lattice that contains all the vectors in $M_{1 \times m}(K) \cdot D$ with integer entries. It is an $\OK$-module. We write 
\begin{align}
	\Lambda_D \otimes \mathbb{R} & = \Lambda_D \otimes_{\mathbb{Z}} \mathbb{R} = \Lambda_D \otimes _{\OK} K_{\mathbb{R}},\\
	\Lambda_D \otimes \mathbb{Q} & = \Lambda_D \otimes_{\mathbb{Z}} \mathbb{Q}  = \Lambda_D \otimes_{\OK } K .
\end{align}
\end{definition}

\begin{definition}
	\label{de:height_definition}
For any Euclidean space $V$ and for any lattice $\Lambda \subseteq V$, we define 
\begin{equation}
  H(\Lambda)  = \vol(\Lambda \otimes \mathbb{R} /\Lambda)
\end{equation}
taken with respect to the restriction of the norm to $ \Lambda \otimes \mathbb{R} \subseteq \mathbb{R}^{d}$. Here, by a lattice we mean any closed discrete subgroup of $V$.
\end{definition}

\begin{remark}
The assignment $D \mapsto \Lambda_D \otimes \mathbb{Q}$ 
is a bijection between $\mathcal{R}_{m,n}(K)$ and the rational subspaces of $K$-dimension $m$ in $K^{n}$.
Furthemore, the assignment $D \mapsto \Lambda_D$ is a bijection between $\mathcal{R}_{m,n}(K)$ and primitive $\OK$-modules of rank $m$ inside $M_{1 \times n}(K)$. Here primitive means that the $\OK$-module is not strictly contained in another $\OK$-module of rank $m$ inside $M_{1 \times n}(K)$. When there is no ambiguity, we shall at times write $H(\Lambda_D)$ as $H(S)$ or $H(D)$ for $S=M_{1 \times m}(K) D$.
\end{remark}

\begin{definition}
	\label{de:defi_of_M_t}
	For any subset $R \subseteq M_{1 \times n}(K_\mathbb{R})$, we denote by $M_{t}(R)$ the set of matrices whose rows only contain elements of $R$.
\end{definition}

\begin{remark}
Observe that $M_{t}(\Lambda_D \otimes \mathbb{Q}) = M_{t \times m}(K) \cdot D$ 
and $M_{t}(\Lambda \otimes \mathbb{R}) = M_{t \times m}(K_\mathbb{R}) \cdot D$. However, $M_t(\Lambda_D) \subsetneq  M_{t \times n}(\OK) \cdot D$ in general for $D \in \mathcal{R}_{m,n}(K)$.
\end{remark}

We remark that the covolume of $M_t(\Lambda_D)$ is related to the covolume $H(D)$ of $\Lambda_D$ by the relation$$H(M_t(\Lambda_D))=H(D)^t.$$
This can be seen by choosing a suitable basis from a reduced basis of $\Lambda_D$.
Since the product of the ``Jacobian'' of the map $x \rightarrow x D$ for $x \in M_{1 \times n}(K_\mathbb{R})$ times the factor $\mathfrak{D}(D)$ is exactly $H(D)$, 
one can show (see Appendix A of \cite{GSV23})
\begin{equation}
  \frac{1}{\mathfrak{D}(D)^{t}} \int_{M_{t\times m}(K_\mathbb{R})} g( x D)dx = \int_{M_t( \Lambda_D )} g(x)d_{D}x,
  \label{eq:d_D_defined}
\end{equation}
where $d_D x$ is a Lebesgue measure on $M_t(\Lambda_D)$ such that $M_t(\Lambda_D) \subseteq M_t(\Lambda_D \otimes \mathbb{R})$
has unit covolume.

One can decompose the set
\begin{equation}
	V_{t,n,m}(\mathcal{O}_K)  = \bigsqcup_{\substack{D \in \mathcal{R}_{m,n}(K)}} 
	\{A \in M_t(\Lambda_D) \mid \rank(A) = m \},
\end{equation}
where $M_t(\Lambda_D)$, as described in Definition \ref{de:defi_of_M_t}, is defined as the set of those matrices in $M_{t \times n}(\OK)$ whose rows are made of elements of $\Lambda_D$.

\subsubsection{How many matrices are we summing up?}

One necessary condition on matrices $D \in \mathcal{R}_{m,n}(K)$ contributing to the count is the existence of at least one full-rank $A \in M_{t}(\Lambda_D)$ such that $g(A \beta) \neq 0$, that is $\|A\|\ll \tfrac{1}{\beta}$. To deal with this, let us introduce the following notation.

\begin{definition}
\label{de:def_of_mathcalR_beta}
We  define 
\begin{equation}
  \mathcal{ R }^{\beta} _{m,n}(K) = \{ D \in \mathcal{R}_{m,n}(K) \mid   \exists A \in M_t(\Lambda_D), \rank A = m, \|A\| \ll \tfrac{1}{\beta}  \}.
\end{equation}
Here the implicit constant in $\|A\| \ll \tfrac{1}{\beta}$ 
is fixed to ensure that $g(A \beta) = 0$ when $\|A \|\gg \frac{1}{\beta}$.
\end{definition}

The following is a result of W. Schmidt \cite{S1967}:
\begin{theorem}
\label{th:schmidt}
\begin{equation}
	T^{n} \ll \card \{ D \in \mathcal{R}_{m,n}(K) \mid H(D) \leq T \} \ll T^{n}.
\end{equation}
\end{theorem}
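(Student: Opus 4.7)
The plan is to count primitive rank-$m$ $\mathcal{O}_K$-sublattices $\Lambda_D \subseteq M_{1 \times n}(\mathcal{O}_K)$ with covolume $H(\Lambda_D) \leq T$, using the bijection $D \leftrightarrow \Lambda_D$ between $\mathcal{R}_{m,n}(K)$ and primitive $\mathcal{O}_K$-modules of rank $m$ noted in the excerpt.

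First I would stratify the count according to the pivot configuration $J \subseteq \{1, \ldots, n\}$, $|J|=m$, of the row-reduced echelon form. By permutation symmetry among coordinates, each of the $\binom{n}{m}$ configurations contributes comparably, so it suffices to analyze the standard pivot set $J = \{1, \ldots, m\}$. For such $D = (I_m \mid B)$ with $B \in M_{m \times (n-m)}(K)$, clearing denominators via a fractional ideal produces a representation in which $\Lambda_D$ is the $\mathcal{O}_K$-row span of a matrix $(\alpha I_m \mid P)$ of integral entries, subject to a primitivity condition. Up to multiplicative constants independent of $T$, the height $H(D)$ is then comparable to the Euclidean norm of this generating matrix in $M_{m \times n}(K_\mathbb{R})$.

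The counting problem thereby reduces to a lattice-point count in $\mathcal{O}_K^n \otimes \mathbb{R}$: the number of primitive matrices in $\mathcal{O}_K^{m \times n}$ of Euclidean norm $\leq T$, modulo a bounded overcounting from the action of $\mathcal{O}_K^\times$ and the stabilizer of the pivot configuration inside $\GL_m(\mathcal{O}_K)$. A volume argument in $M_{m \times n}(K_\mathbb{R})$ combined with M\"obius inversion over integral ideals of $\mathcal{O}_K$ to enforce primitivity yields both the upper and lower bound of the form $T^n$, as in Schmidt's original argument over $\mathbb{Q}$. Summing over the finitely many Steinitz classes (to handle non-free $\mathcal{O}_K$-modules of rank $m$) and the $\binom{n}{m}$ pivot configurations preserves the exponent.

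I expect the main obstacle to be the simultaneous bookkeeping of three intertwined issues over a general number field: (i) pseudobases and Steinitz classes when $\mathcal{O}_K$ is not a PID, which force one to work with modules of the form $\mathcal{O}_K^{m-1} \oplus \mathfrak{a}$; (ii) the unit action, requiring a Dirichlet fundamental domain in $\mathcal{O}_K^\times \otimes \mathbb{R}$ so that generators are counted without redundancy; and (iii) the primitivity condition, enforced through M\"obius inversion over integral ideals of $\mathcal{O}_K$ and controlled via the Dedekind zeta function of $K$. Each contributes only bounded factors in $T$, but one must check carefully that they do not perturb the exponent $n$. The implicit constants depend on the class number, regulator, and residue of the Dedekind zeta function of $K$, but for the purpose of the bound $\asymp T^n$ this level of precision is unnecessary.
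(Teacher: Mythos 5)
First, a point of comparison: the paper does not prove this statement at all --- it is quoted as a known counting theorem of Schmidt \cite{S1967} (with precise asymptotics later given by Thunder \cite{T1993}), so your proposal is not being measured against an argument in the text but against the actual content of Schmidt's theorem. Your plan (echelon stratification, clearing denominators, volume count, M\"obius inversion for primitivity, bookkeeping for units/Steinitz classes) is a reasonable first sketch, but it has a genuine gap at its quantitative core. The height $H(D)$ is the covolume of the rank-$md$ $\mathbb{Z}$-lattice $\Lambda_D$, so it is comparable to the \emph{product} of the lengths of a reduced (pseudo-)basis, not to ``the Euclidean norm of the generating matrix'': a clearing-denominators generator of the form $(\alpha I_m \mid P)$ can have norm wildly larger or smaller than $H(D)^{1/md}$, and a matrix norm and a covolume do not even scale the same way. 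With the correct relation, a ``volume argument in $M_{m\times n}(K_\mathbb{R})$'' counts integral matrices of bounded norm, of which there are $\asymp X^{mnd}$, and one must then quotient by the infinite group $\GL_m(\mathcal{O}_K)$ acting on basis matrices --- this is exactly where reduction theory enters, and it is the heart of the matter, not a bounded overcount by units and a pivot stabilizer as you assert.

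Moreover, even after passing to Minkowski/successive-minima reduced bases, the naive count of reduced tuples $l_1,\dots,l_m$ with $\prod_i \lVert l_i\rVert \ll T$ produces an extra factor $(\log T)^{m-1}$, so the upper bound $\ll T^n$ (with no $\varepsilon$-loss, as the theorem states) does not follow from the sketched argument; Schmidt's proof removes the logarithm by summing over successive-minima profiles with the correct multiplicities (equivalently, via the height zeta function of the Grassmannian, whose rightmost pole at $s=n$ encodes the exponent). The lower bound $\gg T^n$ is likewise asserted rather than proved: it requires exhibiting $\gg T^n$ distinct primitive rank-$m$ modules of height $\le T$, which your construction does not address. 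Two smaller points: the ``permutation symmetry'' claim among pivot sets is not literally correct (pivot sets are not permuted equivariantly), though it is harmless since one only needs an upper bound for each configuration and a lower bound for one; and for $m\ge 2$ primitivity is not a single gcd condition removable by one M\"obius inversion over ideals --- one needs the sublattice-counting Dirichlet series (a convergent zeta-factor correction), which is fixable but must be set up correctly. As it stands, the proposal would need the reduction-theoretic counting of Schmidt's paper to close these gaps, which is precisely why the paper cites \cite{S1967} rather than reproving the bound.
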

In fact, more precise asymptotics were established by J. Thunder \cite{T1993}, but we will not require those.
We estimate an upper bound on the height for the finitely many $D\in\mathcal{R}_{m,n}^{\beta}(K)$:
\begin{lemma}
\label{le:lower_bound_on_C}
For $D \in \mathcal{R}_{m,n}(K)$, 
we have that if $A \in M_{t}(\Lambda_D)$ 
and $\rank (A) = m$
then
\begin{equation}
	\|A \|^{m[K:\mathbb{Q}]} \gg H(D).
\end{equation}
\end{lemma}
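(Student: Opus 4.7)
The plan is to extract from the matrix $A$ an $\OK$-basis of a full-rank sublattice of $\Lambda_D$, estimate the covolume of that sublattice from above in terms of $\|A\|$, and then use the fact that a full-rank sublattice has covolume at least as large as the ambient lattice.

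More precisely, first I would select $m$ rows $v_1,\dots,v_m$ of $A$ that are $K$-linearly independent. Such a choice exists since $\rank A=m$. All rows of $A$ lie in $\Lambda_D$, so the chosen rows live in $\Lambda_D$ and in fact form a $K$-basis of $\Lambda_D\otimes \mathbb{Q}$. The $\OK$-submodule $L:=\OK v_1 + \cdots + \OK v_m$ is therefore a sublattice of $\Lambda_D$ of the same $\OK$-rank $m$, hence the same $\mathbb{Z}$-rank $md$. Since $\Lambda_D$ is primitive inside $M_{1\times n}(\OK)$, $L$ has finite index in $\Lambda_D$, so
\begin{equation}
H(D)=\vol(\Lambda_D\otimes\mathbb{R}/\Lambda_D)\ \leq\ \vol(\Lambda_D\otimes\mathbb{R}/L).
\end{equation}

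Next, I would upper bound $\vol(\Lambda_D\otimes\mathbb{R}/L)$ in terms of the $v_i$. Fix once and for all a $\mathbb{Z}$-basis $e_1,\dots,e_d$ of $\OK$; the set $\{e_a v_i\}_{a=1,\dots,d;\ i=1,\dots,m}$ is a $\mathbb{Z}$-generating set for $L$, of cardinality $md=\dim_{\mathbb{R}}(\Lambda_D\otimes\mathbb{R})$. By Hadamard's inequality applied to the Gram matrix of this generating set (which is in fact a basis, as $L$ has full $\mathbb{Z}$-rank $md$),
\begin{equation}
\vol(\Lambda_D\otimes\mathbb{R}/L)^2\ \leq\ \prod_{a,i}\|e_a v_i\|^2.
\end{equation}
Since left multiplication by the fixed algebraic integer $e_a$ on $K_\mathbb{R}^n$ is a bounded $\mathbb{R}$-linear map, there is a constant $c_K>0$ depending only on $K$ and the chosen basis so that $\|e_a v_i\|\leq c_K\|v_i\|$ for each $a,i$. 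Hence $\vol(\Lambda_D\otimes\mathbb{R}/L)\ll\prod_{i=1}^m \|v_i\|^d$.

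Finally, each $v_i$ is a row of $A$, so $\|v_i\|\leq\|A\|$ in the Euclidean norm on $M_{t\times n}(K_\mathbb{R})$. Combining,
\begin{equation}
H(D)\ \leq\ \vol(\Lambda_D\otimes\mathbb{R}/L)\ \ll\ \prod_{i=1}^m\|v_i\|^{d}\ \leq\ \|A\|^{md},
\end{equation}
which is the desired bound. The only step that requires genuine care is the Hadamard estimate on $L$: one must confirm that the operator norm of multiplication by $e_a\in\OK$ on $K_\mathbb{R}^n$ equipped with the normalized trace pairing is indeed finite and independent of $v_i$, so that the hidden constant depends only on $K$, $n$, $m$, $t$. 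Everything else is an immediate consequence of rank selection plus the sublattice covolume inequality.
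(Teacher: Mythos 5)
Your argument is correct and follows essentially the same route as the paper's proof: both pass to the full-rank sublattice of $\Lambda_D$ generated (over $\OK$) by rows of $A$, use that its covolume is at least $H(D)$, and bound that covolume above by the product of lengths of the $\mathbb{Z}$-basis $\{e_a v_i\}$, each of which is $\ll \|A\|$. The only cosmetic difference is that you bound each factor directly where the paper invokes an AM--GM step to relate $H(D)$ to $\sum\|l_i^{\Lambda}\|^2$ and hence to $\|A\|^2$.
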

\begin{proof}
The $t$ rows of $A$ contain $m$ $K$-linearly independent rows in $\Lambda_D$. 
Hence, if we look at the $\OK$-module generated by the rows of $A$, this will generate
a sublattice $\Lambda$ within $\Lambda_D$. Therefore 
\begin{equation}
	H(\Lambda) \geq H(D).
\end{equation}

By Minkowski's second theorem or by knowing that the Hadamard ratio of a lattice is bounded from below, this shows that any $\mathbb{Z}$-basis of $\Lambda$ should have their products of lengths $\gg H(D)$.
Then, using a $\mathbb{Z}$-basis of $\OK$ multiplied with the columns of $A$, we can can obtain a $\mathbb{Z}$-basis
$\{l_1^{\Lambda},l_2^{\Lambda},\dots,l_{md}^{\Lambda}\}$ of $\Lambda$. 

We know by the arithmetic-mean geometric-mean inequality that 
\begin{equation}
  \sum  \| l_i^{\Lambda}\|^{2} \gg \prod \|l_i^{\Lambda}\|^{\frac{2}{md}} \gg H(D)^{\frac{2}{md}}.
\end{equation}
The statement now follows 
from relating $\|A\|^{2}$ to $\sum \|l_{i}^{\Lambda}\|^{2}$.
\end{proof}

\begin{corollary}
We have 
\begin{equation}
	D \in \mathcal{R}^{\beta}_{m,n}(K) \Rightarrow H(D) \ll \tfrac{1}{\beta^{m[K:\mathbb{Q}]}}.
\end{equation}
Therefore, 
\begin{equation}
  \sum_{D \in  \mathcal{R}^{\beta}_{m,n}(K)} \sum_{\substack{A \in M_{t}(\Lambda_D)   \\ \rank A = m  }} g(\beta A )  \beta^{mtd} = 
  \sum_{\substack{D \in \mathcal{R}_{m,n}(K)   \\  H(D) \ll \frac{1}{\beta^{md}}}} \sum_{\substack{A \in M_{t}(\Lambda_D)   \\ \rank A = m  }} g(\beta A )  \beta^{mtd}.
\end{equation}
\label{co:calprime_bounded}
\end{corollary}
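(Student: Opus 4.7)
The plan is to derive both claims essentially mechanically from Lemma~\ref{le:lower_bound_on_C} combined with the compact support of the admissible test function $g$. First, I would unwind Definition~\ref{de:def_of_mathcalR_beta}: any $D \in \mathcal{R}^{\beta}_{m,n}(K)$ comes equipped with a witness $A \in M_t(\Lambda_D)$ of full rank $m$ with $\|A\| \ll 1/\beta$. Plugging this $A$ into Lemma~\ref{le:lower_bound_on_C} gives $H(D) \ll \|A\|^{md} \ll 1/\beta^{md}$, which is exactly the first displayed implication (recall $d=[K:\mathbb{Q}]$).

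For the equality of sums, the key observation is that the implicit constant in the bound $\|A\| \ll 1/\beta$ in Definition~\ref{de:def_of_mathcalR_beta} was calibrated precisely so that $g(\beta A) = 0$ whenever it is violated; this uses admissibility and the compact support of $g$. Consequently, in either sum, a term can only contribute when $\|A\| \ll 1/\beta$. By the implication just proved, the index set on the left is contained in the index set on the right, so it only remains to show that any $D$ appearing on the right but not in $\mathcal{R}^{\beta}_{m,n}(K)$ contributes zero. For such a $D$, by the very definition of $\mathcal{R}^{\beta}_{m,n}(K)$, no rank-$m$ matrix $A \in M_t(\Lambda_D)$ satisfies $\|A\| \ll 1/\beta$; hence every term of the inner sum is automatically zero, and enlarging the outer sum to all $D$ with $H(D) \ll 1/\beta^{md}$ does not change its value.

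This argument is pure bookkeeping with no real obstacle: the delicate estimate was already packed into Lemma~\ref{le:lower_bound_on_C}, and the only technicality is matching constants between the support of $g$ and Definition~\ref{de:def_of_mathcalR_beta}, which the author has preemptively arranged. The point of stating the corollary in this form is that it replaces the implicit, $g$-dependent condition ``$D \in \mathcal{R}^{\beta}_{m,n}(K)$'' with the clean, $g$-independent height bound $H(D) \ll 1/\beta^{md}$, which is exactly the hypothesis needed to invoke Schmidt's counting theorem~\ref{th:schmidt} downstream in the proof of Proposition~\ref{pr:convergence}.
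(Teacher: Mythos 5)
Your argument is correct and is exactly the (implicit) route the paper intends: the height bound is Lemma~\ref{le:lower_bound_on_C} applied to the witness $A$ from Definition~\ref{de:def_of_mathcalR_beta}, and the equality of sums follows because the constant in that definition is calibrated so that any rank-$m$ matrix in $M_t(\Lambda_D)$ for $D \notin \mathcal{R}^{\beta}_{m,n}(K)$ lies outside the support of $g$, so the extra terms on the right vanish. Your closing observation that this trades the $g$-dependent condition for the height bound needed for Theorem~\ref{th:schmidt} is also consistent with how the corollary is used (and with Remark~\ref{re:necessary_not_sufficient} on the failure of the converse).
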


\begin{remark}
\label{re:necessary_not_sufficient}

Therefore, Corollary \ref{co:calprime_bounded} along with Thereom \ref{th:schmidt} 
tells us that  $\card \mathcal{R}_{m,n}^{\beta}(K)$ is $O(\frac{1}{\beta^{mnd}})$. The converse of Corollary \ref{co:calprime_bounded},
however, is not true. This is in fact one of the technical difficulties in proving Proposition \ref{pr:convergence}.
\end{remark}


To prove Proposition \ref{pr:convergence}, 
we claim that it only needs to be shown that 
\begin{equation}
\label{eq:terms_to_focus_on}
	\sum_{D \in \mathcal{R}^{\beta}_{m,n}(K)}
	\left| \int_{M_t(\Lambda_D \otimes \mathbb{R})} g(x)d_{D}x  - \sum_{\substack{A \in M_t(\Lambda_D) \\ \rank(A) = m}}  \beta^{mtd}g(\beta A) \right|  \rightarrow 0  \text{ as $\beta \rightarrow 0$ }.  
\end{equation}

Here the integral on $M_t(\Lambda_D \otimes \mathbb{R})$ is against a measure  $d_Dx$
that renders $M_t(\Lambda_D)\subseteq M_t(\Lambda_D \otimes \mathbb{R})$ of unit covolume. 
We shall need the following lemma to dispatch the terms not considered in (\ref{eq:terms_to_focus_on}).
\begin{lemma}
\label{le:everything_else}
Suppose that $D \in \mathcal{R}_{m,n}(K) \setminus \mathcal{R}_{m,n}^{\beta}(K)$. Then
we get 
\begin{equation}
  H(D) \gg \tfrac{1}{\beta^{d}}.
\end{equation}
\end{lemma}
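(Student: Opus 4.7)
The plan is to argue by contrapositive: I will show that if $H(D) \ll 1/\beta^{d}$ with a suitable implicit constant, then $D \in \mathcal{R}^{\beta}_{m,n}(K)$, meaning there exists $A \in M_t(\Lambda_D)$ of rank $m$ with $\|A\| \ll 1/\beta$. The core of the argument is a Minkowski-type bound on the ``$K$-successive minima'' of $\Lambda_D$: namely, the existence of $K$-linearly independent $w_1, \dots, w_m \in \Lambda_D$ satisfying $\max_i \|w_i\| \ll H(D)^{1/d}$. Placing these as the first $m$ rows of $A$ (and filling the remaining $t-m$ rows with zeros; note $m\le n<t$) then yields a rank-$m$ matrix with $\|A\| \le \sqrt{m}\,\max_i\|w_i\| \ll H(D)^{1/d} \ll 1/\beta$.

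To produce the $w_i$, first view $\Lambda_D$ as a $\mathbb{Z}$-lattice of rank $md$ inside $\Lambda_D \otimes \mathbb{R}$ with covolume $H(D)$, and let $\lambda_1 \leq \cdots \leq \lambda_{md}$ denote its successive minima. A key initial observation is that, since $\Lambda_D \subseteq M_{1 \times n}(\OK) = \OK^n$, every nonzero vector of $\Lambda_D$ has norm at least the length of the shortest nonzero vector of $\OK^n$, which is a positive constant $c_0 = c_0(K, n) > 0$. Hence $\lambda_i \geq c_0$ for all $i$, and Minkowski's second theorem gives $\prod_{i=1}^{md} \lambda_i \ll H(D)$. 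Next, let $v_1, \dots, v_{md}$ realize these minima. A dimension count shows that any $K$-subspace $V \subseteq \Lambda_D \otimes K$ of $K$-dimension $k$ embeds as a real subspace $V \otimes_K \KR \subseteq \Lambda_D \otimes \mathbb{R}$ of real dimension $kd$, which can contain at most $kd$ $\mathbb{Z}$-linearly independent vectors. Applied to $V = \operatorname{span}_K(v_1, \dots, v_i)$, this yields $\dim_K V \geq \lceil i/d\rceil$. Taking $i = (m-1)d + 1$ shows that $v_1, \dots, v_{(m-1)d+1}$ $K$-span a subspace of $K$-dimension at least $m$, so I may extract $m$ $K$-linearly independent vectors $w_1, \dots, w_m$ from this list with $\max_j \|w_j\| \leq \lambda_{(m-1)d+1}$.

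It remains to bound $\lambda_{(m-1)d+1}$. Using the ordering $\lambda_j \geq \lambda_{(m-1)d+1}$ for $j \geq (m-1)d+1$ together with $\lambda_i \geq c_0$,
\[
    \lambda_{(m-1)d+1}^{d} \;\leq\; \prod_{j=(m-1)d+1}^{md} \lambda_j \;=\; \frac{\prod_{i=1}^{md} \lambda_i}{\prod_{i=1}^{(m-1)d} \lambda_i} \;\ll\; \frac{H(D)}{c_0^{(m-1)d}},
\]
so that $\lambda_{(m-1)d+1} \ll H(D)^{1/d}$ and hence $\max_j \|w_j\| \ll H(D)^{1/d}$, completing the construction of $A$ and proving the lemma. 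The main obstacle is precisely the $d$-th-root improvement: one needs $\max_j \|w_j\| \ll H(D)^{1/d}$ rather than the weaker $\ll H(D)$ that would follow by applying Minkowski's theorem to $\Lambda_D$ naively. This sharper bound relies crucially on the dimension count relating $K$- and $\mathbb{Z}$-linear independence, combined with the uniform lower bound $\lambda_i \geq c_0$ inherited from the inclusion $\Lambda_D \subseteq \OK^n$, which permits the exponent $d$ to escape from the Minkowski product estimate.
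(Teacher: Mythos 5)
Your proof is correct, and it follows the same underlying strategy as the paper's: both arguments reduce the lemma to exhibiting $m$ $K$-linearly independent vectors of $\Lambda_D$ whose lengths are controlled by $H(D)^{1/d}$, and both exploit the trivial lower bound $\|v\|\gg 1$ for nonzero $v\in\Lambda_D\subseteq M_{1\times n}(\OK)$. The difference lies in the Minkowski-type input. The paper quotes part (\ref{lepart:1}) of Lemma \ref{le:props_of_minima} (Minkowski's second theorem adapted to number fields), obtaining successive $K$-minima $l_1,\dots,l_m$ with $\|l_1\|\cdots\|l_m\|\ll H(D)^{1/d}$, and then argues that if $D\notin\mathcal{R}^{\beta}_{m,n}(K)$ some $\|l_i\|\gg\tfrac1\beta$ while the remaining factors are $\gg 1$. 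You instead work with the classical $\mathbb{Z}$-successive minima $\lambda_1\le\cdots\le\lambda_{md}$ of $\Lambda_D$: Minkowski's second theorem over $\mathbb{Z}$, combined with $\lambda_i\ge c_0$ and the fact that the last $d$ factors are at least $\lambda_{(m-1)d+1}$, gives $\lambda_{(m-1)d+1}\ll H(D)^{1/d}$, and your $K$-versus-$\mathbb{R}$ dimension count legitimately extracts $m$ $K$-linearly independent vectors among the first $(m-1)d+1$ minima vectors; padding with zero rows (valid since $m\le n<t$ and $0\in\Lambda_D$) produces the rank-$m$ witness, and your handling of the fixed implicit constant in Definition \ref{de:def_of_mathcalR_beta} via the contrapositive is sound. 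What your route buys is self-containedness—it bypasses the number-field successive-minima statement, which the paper asserts without detailed proof—at the cost of invoking the lower bound $\lambda_i\ge c_0$ inside the Minkowski product, so the argument is tied to the inclusion $\Lambda_D\subseteq\OK^{n}$ (which is all that is needed here), and it delivers the max-norm bound $\max_j\|w_j\|\ll H(D)^{1/d}$ rather than the paper's product bound; either suffices for the conclusion $H(D)\gg\beta^{-d}$.
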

\begin{proof}
	If there is not a single $A \in M_t(\Lambda_D)$ with $\rank (A) = m$ and $\|A\| \ll \frac{1}{\beta}$, then this means that it is impossible to find $m$ $K$-linearly independent vectors in $\Lambda_D$ with lengths $\ll \tfrac{1}{\beta}$. 
	However, in Lemma \ref{le:props_of_minima}, we will see that $\Lambda_D$ has $m$ linearly independent vectors $l_1,l_2,\dots,l_m$ such that 
	\begin{equation}
	  \|l_1\| \|l_2\| \dots \|l_m\| \ll H(D)^{\frac{1}{d}}.
	\end{equation}
	We know that all the $\|l_i\| \gg 1$ because $l_i \in \Lambda_D \subseteq M_{1 \times n}(\OK)$. And by the discussion above, we should have at least one $\|l_i\| \gg \frac{1}{\beta}$. This yields the desired result.
\end{proof}

Hence, we want to show that as $\beta \rightarrow 0$, we have the convergence of the terms
\begin{equation}
	\sum_{\substack{D \in \mathcal{R}_{m,n}(K)  \\ H(D) \gg \frac{1}{\beta^{d}} }} \int_{M_t( \Lambda_D \otimes \mathbb{R} )} g(x) d_D x \rightarrow 0.
\label{eq:tail_of_zeta}
\end{equation}
But this is true since they are the tail in the convergent sum
\begin{equation}
	\sum_{\substack{D \in \mathcal{R}_{m,n}(K)  }} \int_{M_t(\Lambda_D \otimes \mathbb{R} )} g(x) d_D x  < \infty.
\end{equation}
Using Theorem \ref{th:schmidt} and that $\int_{M_t(\Lambda \otimes \mathbb{R})} g(x) d_D x \ll H(D)^{-t}$, one can conclude that the rate of convergence in (\ref{eq:tail_of_zeta}) is at least $O(\beta)$. This establishes our claim that we only need to show (\ref{eq:terms_to_focus_on}).

\subsubsection{Artificially injecting lower rank matrices}

For $D \in \mathcal{R}_{m,n}(K)$ and $v \in M_t(\Lambda_D)$, it is clear that $\rank v \leq m$.
Observe that we can rewrite
\begin{align}
 & \left| \int_{M_t(\Lambda_{D} \otimes \mathbb{R})}g ( x)d_{D} x  - \sum_{\substack{v \in M_t(\Lambda_D) \\ \rank v = m}} g(\beta v ) \beta^{mtd} \right|
 \\ 
 & \leq 
 \left| \int_{M_t(\Lambda_D \otimes \mathbb{R})}  g ( x)d_{D} x  - \sum_{\substack{v \in M_t(\Lambda_D) }} g(\beta v ) \beta^{mtd} \right| + 
 \sum_{\substack{v \in M_t(\Lambda_D) \\ \rank v < m}} \left|g(\beta v)\right| \beta^{mtd} . 
 \label{eq:left_and_right_terms}
\end{align}

The terms on the right side of (\ref{eq:left_and_right_terms})
are lower rank terms which we will deal with in Section \ref{se:lower_rank}. When we sum up the terms on the left, we get 
\begin{equation}
 \sum_{\substack{D \in \mathcal{R}^{\beta}_{m,n}(K)  }}
 \left| \int_{M_t(\Lambda_D \otimes \mathbb{R})}g ( x)d_{D} x  - \sum_{\substack{v \in M_t(\Lambda_D) }} g(\beta v ) \beta^{mtd} \right|.
\end{equation}
The quantity being summed is precisely the difference between a Riemann integration and a Riemann sum. Let us now show that this difference eventually vanishes as $\beta \rightarrow 0$.

\subsubsection{Error estimates for the difference between the integrals and Riemann sums}

Here's a lemma that lets us exploit our hypothesis on the class of test functions.

\begin{lemma}
\label{le:covering_radius}
Let $g:\mathbb{R}^{d} \rightarrow \mathbb{R}$ be a function that is admissible in the sense of Hypothesis \ref{hy:admisible}. Let $\Lambda \subseteq \mathbb{R}^{d}$ be a lattice of $\mathbb{Z}$-rank $m$. 

Then
  \begin{equation}
	  \left|  \int_{\Lambda\otimes \mathbb{R}} g(x)dx  - H(\Lambda)\sum_{v \in \Lambda}g(\beta v) \beta^{m} \right| \ll \beta c(\Lambda)
  \end{equation}
  where $c(\Lambda)$ is the covering radius of $\Lambda$ and $H(\Lambda)$ is the height of $\Lambda$ as defined in Definition \ref{de:height_definition}.  Here the integration on $\Lambda \otimes \mathbb{R}$ 
  is with respect to the Lebesgue measure induced by the inclusion in $\mathbb{R}^{d}$, which is also used to define $H(\Lambda)$.
\end{lemma}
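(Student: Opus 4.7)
The plan is to compare the integral and the Riemann sum cell-by-cell using a fundamental domain of the scaled lattice $\beta\Lambda$ inside the ambient subspace $V := \Lambda\otimes\mathbb{R}$, and then invoke the admissibility hypothesis on $g$ restricted to $V$.

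First, I would fix a fundamental domain $F$ for the action of $\beta\Lambda$ on $V$ (for instance, the Voronoi cell at the origin), which has volume $\beta^{m}H(\Lambda)$ and diameter at most $2\beta c(\Lambda)$. The translates $\{\beta v + F\}_{v\in\Lambda}$ tile $V$ and only finitely many contain points of $\supp(g)$ since $g$ has compact support. On the cell $\beta v + F$, every point $x$ satisfies $\|x-\beta v\|\leq \beta c(\Lambda)$, so $|g(x)-g(\beta v)|\leq E(\beta v,\beta c(\Lambda))$ by definition of $E$. Summing over cells and using that $\int_{\beta v+F}g(\beta v)\,dx=g(\beta v)\beta^{m}H(\Lambda)$, I obtain
\begin{equation}
\left|\int_{V}g(x)\,dx - H(\Lambda)\sum_{v\in\Lambda}g(\beta v)\beta^{m}\right|\leq \beta^{m}H(\Lambda)\sum_{v\in\Lambda}E(\beta v,\beta c(\Lambda)).
\end{equation}

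Next I would convert this discrete sum back into an integral over $V$ so as to apply the admissibility hypothesis. For any $x\in\beta v+F$ and any $y$ with $\|y-\beta v\|\leq \beta c(\Lambda)$, the triangle inequality gives $\|y-x\|\leq 2\beta c(\Lambda)$ and $\|\beta v - x\|\leq\beta c(\Lambda)$, so
\begin{equation}
|g(y)-g(\beta v)|\leq |g(y)-g(x)|+|g(x)-g(\beta v)|\leq 2 E(x,2\beta c(\Lambda)),
\end{equation}
hence $E(\beta v,\beta c(\Lambda))\leq 2 E(x,2\beta c(\Lambda))$ for every $x$ in the cell around $\beta v$. Integrating this pointwise inequality over the cell and summing over $v\in\Lambda$ turns the Riemann sum into the integral
\begin{equation}
\beta^{m}H(\Lambda)\sum_{v\in\Lambda}E(\beta v,\beta c(\Lambda))\leq 2\int_{V}E(x,2\beta c(\Lambda))\,dx.
\end{equation}

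Finally, applying Hypothesis \ref{hy:admisible} with subspace $V\subseteq \mathbb{R}^{d}$ and radius $r=2\beta c(\Lambda)$ (which is small provided $\beta$ is small enough, the regime of interest since $\beta\to 0$), the right-hand side is bounded by $4C\beta c(\Lambda)$, which yields the claim. The only genuinely delicate point is the passage from the discrete sum $\sum_v E(\beta v,\beta c(\Lambda))$ to the continuous integral $\int_V E(x,2\beta c(\Lambda))\,dx$: this is precisely what the uniformity over subspaces in Hypothesis \ref{hy:admisible} is designed to enable, so no further work beyond the triangle-inequality estimate above is required.
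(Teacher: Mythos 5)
Your proof is correct and follows essentially the paper's argument: tile $V=\Lambda\otimes\mathbb{R}$ by fundamental-domain cells of $\beta\Lambda$, bound the cell-wise discrepancy via the error function $E$, and apply Hypothesis \ref{hy:admisible} to the subspace $V$. The only (harmless) difference is that the paper centers $E$ at the continuous point $\beta(v+x)$ rather than at the lattice point $\beta v$, so the sum of errors is immediately $\int_{V}E(x,\beta c(\Lambda))\,dx$ and your extra doubled-radius triangle-inequality step (with its factor $4$) is not needed.
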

\begin{proof}
	Let $F \subseteq \Lambda_\mathbb{R}$ be a fundamental domain 
	of $\Lambda_\mathbb{R}/ \Lambda$ such that $F \subseteq B_{c(\Lambda)}(0)$. Then, we can write 
  {\small\begin{align}
	  \left|  \int_{x \in \Lambda\otimes \mathbb{R}} g(x)dx  - \left(\int_{F} dx\right) \sum_{v \in \Lambda}g(\beta v) \beta^{d} \right|
	  & = 
	  \left|  \int_{x \in \Lambda\otimes \mathbb{R}} g( \beta x) \beta ^{m } dx  - \left(\int_{F} dx \right) \sum_{v \in \Lambda}g(\beta v) \beta^{m} \right| \\
	  & = 
	  \left|  \sum_{v \in \Lambda } \int_{F} g( \beta x+ \beta v) \beta^{m} dx  - \sum_{v \in \Lambda} \int_{F} g(\beta v)  \beta^{m} dx \right| \\
	  & \leq
	    \sum_{v \in \Lambda } \int_{F} \left|g( \beta x+ \beta v)   -g(\beta v) \right| \beta^{m} dx. \\
  \end{align}}
  Let $E(x,r)$ be the error function associated to $g$ as in Hypothesis \ref{hy:admisible}. Then the error function associated to $x \mapsto g(\beta x)$ is $x \mapsto E( \beta x,{\beta} r)$ since 
  \begin{equation}
    \sup_{\|x-y\| < r} |g(\beta x) - g(\beta y)| 
    =
    \sup_{\| \beta x - y \| < \beta r  } |g(\beta x) - g(y)| .
  \end{equation}
  Therefore 
  \begin{align}
    \sum_{v \in \Lambda } \int_{F} \left|g( \beta x+ \beta v)   -g(\beta v) \right| \beta^{m} dx  
    & \leq \sum_{v \in \Lambda} \int_{F} E(\beta v + \beta x, \beta c(\Lambda)) \beta^{m}dx \\
    & = \int_{\Lambda\otimes {\mathbb{R}}} E(\beta x,\beta c(\Lambda))\beta^{m} dx  \\
    & = \int_{\Lambda\otimes {\mathbb{R}}} E( x,\beta c(\Lambda))dx \\
    & \ll \beta c(\Lambda).
  \end{align}
\end{proof}

\begin{corollary}
The following estimate holds for the difference between the integrals and the Riemann sums:
\begin{equation}
  \left |  \frac{1}{H(D)^{t}}   \int_{M_t(\Lambda_D \otimes \mathbb{R})} g(x) dx \  -\sum_{v \in M_t(\Lambda_D)} g(\beta v) \beta^{mtd} \right| \ll \beta \frac{c(\Lambda_D)}{ H(D)^{t}},
\end{equation}
where the integral on $M_{t}(\Lambda_D \otimes \mathbb{R})$ is with respect to the induced Lebesgue measure from $M_t( \Lambda_D \otimes \mathbb{R} ) \subseteq M_{t \times n}(K_\mathbb{R})$.
\end{corollary}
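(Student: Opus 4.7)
The plan is to apply Lemma~\ref{le:covering_radius} directly to the lattice $\Lambda = M_t(\Lambda_D)$, viewed inside the ambient Euclidean space $M_{t \times n}(K_\mathbb{R})$ equipped with the inner product \eqref{eq:norm}. First I would identify the relevant parameters. Since $\Lambda_D$ has $\mathbb{Z}$-rank $md$, the lattice $M_t(\Lambda_D) \cong \Lambda_D^{\oplus t}$ has $\mathbb{Z}$-rank $mtd$, and the covolume relation $H(M_t(\Lambda_D)) = H(D)^t$ was noted just before \eqref{eq:d_D_defined}. Admissibility of $g$ transfers without fuss, since Hypothesis~\ref{hy:admisible} requires the error-function estimate for \emph{every} nonzero real subspace $V \subseteq M_{t \times n}(K_\mathbb{R})$, and $M_t(\Lambda_D \otimes \mathbb{R})$ is exactly such a subspace.

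Substituting these parameters into Lemma~\ref{le:covering_radius} yields
\begin{equation*}
\left| \int_{M_t(\Lambda_D \otimes \mathbb{R})} g(x)\, dx \;-\; H(D)^t \sum_{v \in M_t(\Lambda_D)} g(\beta v)\, \beta^{mtd} \right| \;\ll\; \beta \cdot c(M_t(\Lambda_D)),
\end{equation*}
where the integral is taken with respect to the Lebesgue measure on $M_t(\Lambda_D \otimes \mathbb{R})$ induced from the ambient Euclidean structure on $M_{t \times n}(K_\mathbb{R})$---i.e.\ precisely the measure named in the statement of the corollary. Dividing both sides by $H(D)^t$ produces the desired estimate, provided one also bounds $c(M_t(\Lambda_D)) \ll c(\Lambda_D)$.

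The last inequality is a routine product-lattice computation: given any target point $(y_1,\dots,y_t) \in M_t(\Lambda_D \otimes \mathbb{R})$, choosing a nearest $v_i \in \Lambda_D$ to each row $y_i$ produces an element of $M_t(\Lambda_D)$ at Euclidean distance at most $\sqrt{\sum_i \|y_i - v_i\|^2} \leq \sqrt{t}\, c(\Lambda_D)$, and the factor $\sqrt{t}$ is a fixed constant absorbed into $\ll$.

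There is no genuine obstacle here; the corollary is essentially a repackaging of Lemma~\ref{le:covering_radius} once one takes the ambient lattice to be $M_t(\Lambda_D)$. The only point requiring mild care is distinguishing the two normalizations of the measure on $M_t(\Lambda_D \otimes \mathbb{R})$ in play earlier---the unit-covolume measure $d_Dx$ appearing in \eqref{eq:d_D_defined} versus the ambient-induced Lebesgue measure used here---whose ratio is exactly the factor $H(D)^t$ that is divided out in the final statement.
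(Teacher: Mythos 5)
Your proposal is correct and matches the paper's proof, which consists precisely of applying Lemma~\ref{le:covering_radius} to the rank-$mtd$ lattice $M_t(\Lambda_D)$ with $H(M_t(\Lambda_D)) = H(D)^t$ and then checking $c(M_t(\Lambda_D)) \ll c(\Lambda_D)$. Your product-lattice argument with the absorbed $\sqrt{t}$ factor is exactly the one-line verification the paper leaves to the reader.
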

\begin{proof}
The only thing to check is that $c(M_t(\Lambda_D))\ll c(\Lambda_D)$.
\end{proof}

\begin{remark}
For the integral on $M_{t}(\Lambda_D \otimes \mathbb{R})$ with respect to the induced Lebesgue measure, one can also write
\begin{equation}
\frac{1}{H(D)^{t}}   \int_{M_t(\Lambda_D \otimes \mathbb{R})} g(x) dx
= 
\int_{M_{t}(\Lambda_D \otimes \mathbb{R})} g(x) d_D x,
\end{equation}
where $d_D x$ is as described in (\ref{eq:d_D_defined}).
\end{remark}


Therefore, in order to prove Proposition \ref{pr:convergence} we have to show that as $\beta \rightarrow 0$, apart from the 
terms to be discussed in Section \ref{se:lower_rank}, the following happens:
\begin{equation}
  \sum_{\substack{D \in \mathcal{R}_{m,n}(K) \\ \rank D = m  \\   H(D) \ll \frac{1}{\beta^{md}}}}  \beta\frac{c(\Lambda_{D})}{H(D)^{t}} \rightarrow 0.
\end{equation}

We will split the sum into two sums based on how ``nice'' $D$ is. 

Let $\delta \in~ ]0,1[$.
For a given $D \in \mathcal{R}_{m,n}(K)$, the following two things could happen with respect to the covering radius $c(\Lambda_{D})$: 
\begin{enumerate}
  \item 
Either $c(\Lambda_{D}) \ll  \frac{1}{\beta^{1-\delta}} \leq \frac{1}{\beta}$, which we will call the well-rounded case,
\item
or we have $  c(\Lambda_{D}) \gg   \tfrac{1}{\beta^{1-\delta}}$ which we will call the skew case. 
\end{enumerate}
The ideal choice of $\delta$ is as close to 1 as possible.

{\bf Well-rounded case:}

For this, we simply observe that
\begin{equation}
	\sum_{\substack{D \in \mathcal{R}_{m,n}(K) \\   H(D) \ll \frac{1}{\beta^{md}} \\ D \text{ well-rounded}}}  \beta\frac{c(\Lambda_{D})}{H(D)^{t}}  \ll
	\sum_{\substack{D \in \mathcal{R}_{m,n}(K) \\   H(D) \ll \frac{1}{\beta^{md}}  }  }  \frac{\beta^{\delta} }{H(D)^{t} }   \ll \beta^{\delta}.
\end{equation}
This is because we know that Schmidt's Theorem \ref{th:schmidt} and partial summation that 
\begin{equation}
	\sum_{\substack{D \in \mathcal{R}_{m,n}(K)  }}  \frac{1}{H(D)^{t}}   < \infty.
\end{equation}

In order to have the rate of convergence as claimed, we should have $\delta$ arbitrarily close to 1.


{\bf Skew case:}

For the skew case, 
we will have to employ some reduction theory of $\OK$-modules as follows: 
\begin{lemma}
	\label{le:props_of_minima}
	Let $d = [K:\mathbb{Q}]$.
Let $\Lambda \subseteq M_{1 \times n}(K)$ be an $\OK$-module of rank $m$. 
Define the successive $K$-minima of $\Lambda$ as
$l_i = l_i(\Lambda)$ for 
$i=1,\dots,m$ given by 
\begin{align}
	l_1(\Lambda) &  = \argmin_{v \in \Lambda \setminus \{0\} } \|v\| \\
  l_2(\Lambda)  & = \argmin_{v \in \Lambda \setminus K\cdot l_1  } \|v\| \\
  l_3(\Lambda)  & = \argmin_{v \in \Lambda \setminus K \cdot l_1 + K \cdot l_2  } \|v\| \\
		&~\vdots
\end{align}
Then, the following statements hold.
\begin{enumerate}
	\item \label{lepart:1}
		Let $H(\Lambda)  = \vol(\Lambda\otimes \mathbb{R}/ \Lambda)$ as before, measured with respect to the canononical inclusion $\Lambda\otimes \mathbb{R} \subseteq M_{1 \times n}(K_\mathbb{R})$.
   We have the relations
   \begin{equation}
     \|l_1\| \ll H(\Lambda)^{\frac{1}{dm}}
   \text{ and }
 \|l_1\|^{d} \dots  \|l_{m}\|^{d} \ll H(\Lambda).
\end{equation}

\item 
	\label{lepart:2}
	We get that 
	\begin{equation}
	  \|l_{m}(\Lambda)\| \gg c(\Lambda) .
	\end{equation}

\item \label{lepart:3}
	For $i<j$, denote the map $\pi_{i}:M_{1\times n}(K_\mathbb{R}) \rightarrow K_\mathbb{R} \cdot l_i$ 
	to be the orthogonal projection onto $K_\mathbb{R} \cdot l_i$. Then
	\begin{equation}
	  \| \pi_{i}(l_j)\| \ll \|l_i\|.
	\end{equation}

\end{enumerate}
  
\end{lemma}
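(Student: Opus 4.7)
The overall strategy is to pass to $\Lambda$ viewed as a $\mathbb{Z}$-lattice of rank $md$ in the Euclidean space $M_{1\times n}(\KR)$ and bootstrap from the classical Minkowski theorems after relating the $K$-minima to the ordinary $\mathbb{Z}$-minima. Fix once and for all a $\mathbb{Z}$-basis $\alpha_1,\dots,\alpha_d$ of $\OK$; each $\alpha_s$ acts as componentwise multiplication on $M_{1\times n}(\KR)$ with operator norm $O_K(1)$, so $\|\alpha_s v\| \ll \|v\|$ for all $v$. The $md$ vectors $\{\alpha_s l_i\}_{1\le s\le d,\,1\le i\le m}$ are then $\mathbb{Z}$-linearly independent, because the $l_i$ are $K$-linearly independent and the $\alpha_s$ form a $\mathbb{Q}$-basis of $K$.

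For part (\ref{lepart:1}), let $\mu_1 \le \cdots \le \mu_{md}$ denote the successive $\mathbb{Z}$-minima of $\Lambda$. The key observation is $\|l_k\| \le \mu_{(k-1)d+1}$: any $(k-1)d+1$ $\mathbb{Z}$-linearly independent vectors realizing the first $(k-1)d+1$ minima span a $K$-subspace of dimension at least $\lceil((k-1)d+1)/d\rceil = k$, so at least one of them lies outside $K l_1 + \cdots + K l_{k-1}$ and thus bounds $\|l_k\|$. Using $\mu_{(k-1)d+1}^d \le \prod_{i=(k-1)d+1}^{kd}\mu_i$ and Minkowski's second theorem,
\begin{equation}
\prod_{k=1}^m \|l_k\|^d \ \le\ \prod_{i=1}^{md}\mu_i \ \ll\ H(\Lambda),
\end{equation}
and the bound $\|l_1\| \ll H(\Lambda)^{1/(md)}$ follows from $\|l_1\|^{md} \le \prod_k \|l_k\|^d$. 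Part (\ref{lepart:2}) is then immediate from the standard estimate $c(\Lambda) \le \tfrac{\sqrt{md}}{2}\,\mu_{md}(\Lambda)$ (obtained e.g.\ from a Minkowski-reduced $\mathbb{Z}$-basis) combined with $\mu_{md}(\Lambda) \le \max_{s,i}\|\alpha_s l_i\| \ll \|l_m\|$.

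For part (\ref{lepart:3}) the plan is an exchange argument. Assume for contradiction that $\|\pi_i(l_j)\| \ge C\|l_i\|$ with a sufficiently large constant $C>0$. The crucial input is that $\OK \cdot l_i \subseteq \KR \cdot l_i$ is a rank-$d$ sublattice of covering radius $\ll \|l_i\|$: indeed componentwise multiplication gives $\|(a-c)l_i\| \ll \|a-c\|_\infty \cdot \|l_i\|$, and $\OK \subset \KR$ has bounded covering radius for the max norm. Hence we may pick $c \in \OK$ with $\|\pi_i(l_j) - c l_i\| < \|\pi_i(l_j)\|$. Since $c l_i \in \KR \cdot l_i$, the orthogonal decomposition
\begin{equation}
\|l_j - c l_i\|^2 = \|\pi_i(l_j) - c l_i\|^2 + \|\pi_i^\perp(l_j)\|^2 < \|\pi_i(l_j)\|^2 + \|\pi_i^\perp(l_j)\|^2 = \|l_j\|^2
\end{equation}
produces a vector $l_j - c l_i \in \Lambda \setminus (K l_1 + \cdots + K l_{j-1})$ (as $c l_i$ belongs to this $K$-subspace while $l_j$ does not) strictly shorter than $l_j$, contradicting the definition of $l_j$.

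The main obstacle is the input used in part (\ref{lepart:3}), namely verifying the covering-radius bound for $\OK \cdot l_i$ inside $\KR \cdot l_i$ uniformly in $l_i$; this reduces to comparing the trace norm on $\KR$ with the max norm coming from the real and complex embeddings, both of which define commensurable metrics with constants depending only on $K$.
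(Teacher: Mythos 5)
Your proposal is correct and follows essentially the same route as the paper: part (2) rests on building $md$ independent lattice vectors $\alpha_s l_i$ from a $\mathbb{Z}$-basis of $\OK$ and comparing the covering radius to the last minimum, and part (3) hinges, just as in the paper, on the covering radius of $\OK\cdot l_i$ inside $\KR\cdot l_i$ being $\ll \|l_i\|$ together with the Pythagorean decomposition along $\pi_i$ (your exchange/contradiction phrasing is the contrapositive of the paper's direct minimality computation). Your part (1), interleaving the $K$-minima with the $\mathbb{Z}$-minima via $\|l_k\|\leq \mu_{(k-1)d+1}$ and invoking Minkowski's second theorem, is a valid and welcome elaboration of what the paper only cites as standard.
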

\begin{proof}

{ \em Proof of \ref{lepart:1}: }

This first statement is basically stating the existence of Hermite's constant.
The second is Minkowski's lemma about successive minima adapted to number fields. 

{ \em Proof of \ref{lepart:2}: }

We know that $\Lambda' = \OK l_1 + \dots + \OK l_m$ is a sublattice inside $\Lambda$. Although it is not true that $\Lambda = \Lambda'$ in general, we can say that  $c(\Lambda) \leq c(\Lambda')$.
So it is sufficient to show that $c(\Lambda') \ll \|l_m\|$.

To do this, we can use a $\mathbb{Z}$-basis of $\OK$ to construct from $l_1,\dots,l_m$ 
a $\mathbb{Z}$-basis $l'_{1},l'_{2}, \dots , l'_{md}$ of $\Lambda'$. Without loss of generality, assume that $\|l_1'\| \leq \|l_2'\| \leq \dots  \leq \|l'_{md}\|$. Then, since $ \|l'_{md}\| \ll \|l_{m}\|$,
we know that it is sufficient to show that $c(\Lambda')\ll \|l'_{md}\|$.
This is a standard inequality about the covering radius. See \cite{C12}.

{ \em Proof of \ref{lepart:3}: }

Observe that $l_j + \OK \cdot l_i \subseteq \Lambda$. We also know that for any $\alpha \in \OK$,
the definition of $l_j$ implies $\|l_j\| \leq \|l _j + \alpha \cdot l_i\|$.
It is clear that 
\begin{equation}
  \pi_{i}(l_j + \alpha  \cdot l_i) = \pi_{i}(l_j) +   \pi_{i}(\alpha \cdot l_{i}) .
\end{equation}
Now $\alpha l_{i} \in \OK \cdot l_{i} \subseteq K_\mathbb{R} \cdot l_{i}$ so $\pi_{i}(\alpha \cdot l_i) = \alpha \cdot l_{i}$. Furthermore, we also know that for any $x \in M_{1 \times n}(K_\mathbb{R})$
\begin{equation}
	\|x\|^{2} = \|\pi_{i}(x)\|^{2} + \|\pi_{i}^{\perp}(x)\|^{2},
\end{equation}
where $\pi_{i}^{\perp}$ is the projection to the orthogonal complement of $K_\mathbb{R}\cdot l_{i} \subseteq M_{1 \times n}(K_\mathbb{R})$. 
We know that $\pi_{i}^{\perp}(l_j + \alpha \cdot l_i) = \pi_{i}^{\perp}(l_j)$.
The net result is that 
\begin{align}
& \|\pi_{i}(l_j) + \alpha \cdot l_i \|^{2} +  \|\pi_{i}^{\perp}(l_i)\|^{2}    = \|l_j + \alpha \cdot l_i\|^{2} \geq  \|l_j\|^{2} = \|\pi_{i}(l_j)\|^{2} + \|\pi_{i}^{\perp}(l_j)\|^{2}\\
 \Rightarrow
& \|l_j + \alpha \cdot l_i\|^{2} - \|l_j\|^{2} = \|\pi_{i}(l_j) + \alpha \cdot l_i\|^{2} - \|\pi_{i}(l_{j})\|^{2} \geq 0.
\end{align}
This tells us that
\begin{align}
  & \pi_{i}(l_j) = \argmin_{\alpha \in \OK} \|\pi_{i}(l_j) + \alpha \cdot l_i\| \\
  \Rightarrow & \|\pi_{i}(l_j)\| \leq c( \OK \cdot l_i).
\end{align}
So we now only need to show that the covering radius $c(\OK \cdot l_i) \ll \|l_i\|$. This follows 
from the proof of Part \ref{lepart:2} of the statement.
\end{proof}

We are now ready to evaluate our sum using the lemma and obtain:
\begin{equation}
	\sum_{\substack{D \in \mathcal{R}_{m,n}(K) \\ D \text{ skew} }}
\beta	\frac{  c(\Lambda_D) }{ H(D)^{t} } \ll 
\beta \sum_{\substack{D \in \mathcal{R}_{m,n}(K)  \\  D \text{ skew} }}
	\frac{ \|l_{m}(\Lambda_D) \| }{ \|l_1(\Lambda_D)\|^{td} \cdot \|l_2(\Lambda_D)\|^{td}\dots \|l_m(\Lambda_D)\|^{td}}.
\end{equation}

We know that each $D$ uniquely corresponds to the successive $K$-minima 
$$l_1,\dots,l_m \in \Lambda_D \subseteq M_{1\times n}(\OK).$$
Indeed, one can recover $\Lambda_D$ and therefore $D$ from the successive $K$-minima. We also know that $\|l_1(\Lambda_D)\| \ll H(D)^{\frac{1}{mtd}} \ll \tfrac{1}{\beta}$. Hence, we can estimate the above sum
by summing over all possible
lattices $l_1,\dots,l_m \in M_{1 \times n}(\OK)$.
By Lemma \ref{le:props_of_minima} Part \ref{lepart:3}, we get that 
{\small\begin{align}
	\sum_{\substack{D \in \mathcal{R}_{m,n}(K) \\  D \text{ skew} }}
\beta	\frac{  c(\Lambda_D) }{ H(D)^{t} } \ll 
	\sum_{ \substack{l_1 \in M_{1 \times n}(\OK) \\ \|l_1\| \ll \tfrac{1}{\beta}   }}  
	\sum_{ \substack{l_2 \in M_{1 \times n}(\OK) \\ \|l_1\| \ll \|l_2\|  \\ \|\pi_{1}(l_2)\| \ll \|l_1\|   }}  
	\dots
	\sum_{ \substack{l_m \in M_{1 \times n}(\OK)   \\ \|\pi_1(l_m)\| \ll \|l_1\| \\ \|l_m\| \gg \frac{1}{\beta^{1-\delta}} }}  
	\frac{\beta}{\|l_1\|^{td} \|l_2\|^{td} \dots \|l_m\|^{td-1} }. \\
\end{align}}

The following result on summation by parts will come in handy: 
\begin{lemma}
\label{le:domain_dimension_bound}
Let $h_1,h_2 \in \mathbb{Z}_{\geq 1}$. 
Let $D \subseteq M_{1 \times n}(K_\mathbb{R})$ be a domain of infinite volume such that for $T \rightarrow \infty$
\begin{equation}
  \card D \cap M_{1 \times n}(\OK) \cap B_{T}(0) \ll C \cdot T^{h_1},
\end{equation}
where $C$ is a variable and the $\ll$ does not depend on $C$.

Then, 
\begin{equation}
  \sum_{\substack{ l \in  D \cap M_{1 \times n}(\OK) \cap B_{T}(0) \\ a \leq \|l\| \leq b}} \frac{1}{\|l\|^{h_2}} \ll C \left( a^{h_1-h_2} + b^{h_1-h_2} + \int_{a}^{b} t^{h_1 - h_2 -1} dt \right).
\end{equation}
\end{lemma}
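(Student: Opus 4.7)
The plan is a straightforward partial summation (Abel summation) argument. Let me set up the counting function
\begin{equation}
    N(T) = \card\bigl(D \cap M_{1 \times n}(\OK) \cap B_{T}(0)\bigr),
\end{equation}
so that by hypothesis $N(T) \ll C \cdot T^{h_1}$ with an implicit constant independent of $C$. The sum to bound is
\begin{equation}
    S(a,b) := \sum_{\substack{l \in D \cap M_{1 \times n}(\OK) \\ a \leq \|l\| \leq b}} \frac{1}{\|l\|^{h_2}} = \int_{a^-}^{b} \frac{1}{t^{h_2}}\, dN(t),
\end{equation}
interpreted as a Stieltjes integral against the (discrete) measure $dN$.

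Next, I would apply integration by parts to obtain
\begin{equation}
    S(a,b) = \frac{N(b)}{b^{h_2}} - \frac{N(a^-)}{a^{h_2}} + h_2 \int_a^b \frac{N(t)}{t^{h_2+1}}\, dt.
\end{equation}
Inserting the bound $N(t) \ll C t^{h_1}$ into each of the three terms yields
\begin{equation}
    S(a,b) \ll C\, b^{h_1 - h_2} + C\, a^{h_1-h_2} + C \int_a^b t^{h_1 - h_2 -1}\, dt,
\end{equation}
which is the desired inequality. The hypothesis that $C$ does not appear in the implicit constant of the lattice point count is precisely what allows us to factor it out uniformly across all three terms.

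There is no real obstacle here; the only minor technical point is being careful that the boundary term at $a$ uses $N(a^-)$ rather than $N(a)$ (which is harmless since $N(a^-) \leq N(a) \ll C a^{h_1}$), and that the Stieltjes integration-by-parts identity holds for the nondecreasing step function $N$ against the smooth weight $t^{-h_2}$ on $[a,b]$. Both are standard. Note also that the middle integral term may or may not dominate the boundary terms depending on the sign of $h_1 - h_2$, which is why all three terms must be kept on the right-hand side rather than combined.
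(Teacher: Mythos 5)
Your proof is correct and is essentially the paper's own argument: the paper simply states ``summation by parts'' and refers to Katznelson \cite{K1994} for the details, which are exactly the Abel/Stieltjes partial-summation computation you carry out. Your write-up just makes explicit the boundary terms and the uniform factoring-out of $C$, which is fine.
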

\begin{proof}
Summation by parts. See \cite[(13)]{K1994} for details. 
\end{proof}

We will employ Lemma \ref{le:domain_dimension_bound} with 
\begin{equation}
  D = \{ x\in M_{1 \times n}(K_\mathbb{R}) \mid \|\pi_1(x)\| \ll \|l_1 \|\}.
\end{equation}
We then get that $C  = \|l_1\|^{d}$, $h_1 = nd-d$. We put $h_2 = td-1 $, $a=\tfrac{1}{\beta^{1-\delta}}$ and $b = \infty$. The choice of $b$ works because $h_1 - h_2 < 0$ since $t>n$.

This tells us that 
{\smaller\begin{align}
	& \sum_{\substack{D \in \mathcal{R}_{m,n}(K) \\   D \text{ skew} }}
\beta	\frac{  c(\Lambda_D) }{ H(D)^{t} } \\
	\ll & \beta
	\sum_{ \substack{l_1 \in M_{1 \times n}(\OK)  \\ \|l_1\| \ll \tfrac{1}{\beta}  }}  \frac{1}{\|l_1\|^{ td }}
	\sum_{ \substack{l_2 \in M_{1 \times n}(\OK) \\ \|l_1\| \ll \|l_2\|  \\ \|\pi_{1}(l_2)\| \ll \|l_1\|   }}  
	\frac{1}{\|l_2\|^{td}}
	\cdots
	\sum_{ \substack{l_{m-1} \in M_{1 \times n}(\OK)  \\ \|\pi_1(l_{m-1})\| \ll \|l_1\| \\ \|l_{m-1}\| \gg \|l_{m-2}\| }}  
	\frac{\|l_1\|^{d} }{ \|l_{m-1}\|^{td} }
	\left( \frac{1}{\beta^{(1-\delta)}}\right)^ {((n-t-1)d + 1)} \\
	=
	    & \beta^{ 1+ (1-\delta)((t-n+1)d - 1)  )}
	\sum_{ \substack{l_1 \in M_{1 \times n}(\OK)  \\ \|l_1\| \ll \tfrac{1}{\beta}  }}  \frac{1}{\|l_1\|^{ (t-1)d }}
	\sum_{ \substack{l_2 \in M_{1 \times n}(\OK) \\ \|l_1\| \ll \|l_2\|  \\ \|\pi_{1}(l_2)\| \ll \|l_1\|   }}  
	\frac{1}{\|l_2\|^{td}}
	\cdots
	\sum_{ \substack{l_{m-1} \in M_{1 \times n}(\OK)  \\ \|\pi_1(l_{m-1})\| \ll \|l_1\| \\ \|l_{m-1}\| \gg \|l_{m-2}\| }}  
	\frac{1}{ \|l_{m-1}\|^{td} }
	.
\end{align}}

In the innermost sum, let us replace the condition $\|\pi_{1}( l_{m-1} )\| \ll \| l_{1}\|$ with the condition
$\|\pi_{m-2}( l_{m-1} )\| \ll \| l_{m-2}\|$ which holds when $l_1,\dots,l_m$ are the successive $K$-minima of some $\OK$-module.

Then, again using Lemma \ref{le:domain_dimension_bound} with $C = \|l_{m-2}\|^{d}$, $h_1 = nd-d$ and $h_2 = td $, we obtain that the innermost sum is $\ll \tfrac{1}{\|l_{m-2}\|^{d(t-n)}} \ll 1$.
Proceeding in this way all the nested sums can be collapsed. All that remains to show is 
\begin{align}
	\sum_{ \substack{l_1 \in M_{1 \times n}(\OK)  \\ \|l_1\| \ll \tfrac{1}{\beta}  }}  \frac{1}{\|l_1\|^{ (t-1)d }}
	\ll 
	    & o\left(\frac{1}{\beta^{1+ (1-\delta)((t-n+1)d - 1)} }\right) .
\end{align}

Observe that the left hand side is $\ll 1$ if $t>n+1$ and $ \log \frac{1}{\beta}$ for $t=n+1$. So as long as $\delta < 1$, we have the required assertion for the skew case.
\par
\subsection{Lower rank matrices}
\label{se:lower_rank}

So far, we have a proof of Proposition \ref{pr:convergence} when $m=1$ since $m'<m \Rightarrow m' = 0$ and then everything is trivial. We can then assume $m>1$.

We are left with estimating the terms
 \begin{equation}
 \sum_{\substack{D \in \mathcal{R}_{m,n}^{\beta}(K) }}
 \sum_{\substack{v  \in  M_t(\Lambda_D) \\ \rank v < m}} \left|g(\beta v)\right| \beta^{mtd} .
 \end{equation}

Since $g$ is compactly supported,$$\{A\in M_t(\Lambda_D):g(A\beta)\neq0\}\subseteq\{A\in M_t(\Lambda_D):\lVert A\rVert\ll\tfrac{1}{\beta}\}.$$We view the lattice $M_t(\Lambda_D)$ as a lattice in $\R^{tnd}$ of rank $mt$. By the covering radius argument, we obtain that
\begin{equation}\left|M_t(\Lambda_D)\cap B_{1/\beta} \right|\asymp_{m,t,d}
\left(
\frac{1}{\beta } + O(1)
\right)^{mtd} \cdot
\frac{1}{H(D)^t}.
\end{equation}

The contribution from matrices of rank less than $m$ involves matrices whose rows lie in the intersection of two or more distinct $\Lambda_D$'s, with $D\in\mathcal{R}_{m,n}^{\beta}(K)$. It follows by using the inclusion-exclusion principle that

{\small $$
\sum_{D\in\mathcal{R}_{m,n}^{\beta}(K)}\underset{\rank v<m}{\sum_{v\in M_t(\Lambda_D)}}|g(\beta v)|\beta^{mtd}\\=\sum_{1\le s\le m-1}\sum_{D'\in\mathcal{R}_{s,n}^{\beta}(K)}\sum_{v\in M_t(\Lambda_{D'})}|g(\beta v)|\beta^{mtd}n^{\beta}_{m,n}(D',K),
$$where$$n^{\beta}_{m,n}(D',K)=\underset{\Lambda_{D'}\subseteq\Lambda_D}{\sum_{D\in\mathcal{R}_{m,n}^{\beta}(K)}}1.$$}

\begin{lemma}\label{lemma1} Let $1\le s\le m-1$ and let $D'\in\mathcal{R}_{s,n}^{\beta}(K)$. Then$$n^{\beta}_{m,n}(D',K)\ll\frac{1}{\beta^{d(n-s)(m-s))}}\cdot H(D')^{m-s}.$$

\end{lemma}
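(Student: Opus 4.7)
The strategy is to pass to an orthogonal quotient and reduce the count to one that can be handled by Schmidt's Theorem~\ref{th:schmidt}. Let $V = \Lambda_{D'}\otimes \mathbb{R} \subset M_{1\times n}(K_\mathbb{R})$ and let $\pi^\perp$ denote orthogonal projection onto $V^\perp$. Since $\Lambda_{D'}$ is primitive in $M_{1\times n}(\mathcal{O}_K)$, the restriction of $\pi^\perp$ to $M_{1\times n}(\mathcal{O}_K)$ has kernel exactly $\Lambda_{D'}$, so $M^\perp := \pi^\perp(M_{1\times n}(\mathcal{O}_K))$ is a torsion-free rank-$(n-s)$ $\mathcal{O}_K$-lattice in $V^\perp$ of covolume $1/H(D')$. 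The assignment $D \mapsto \pi^\perp(\Lambda_D)$ gives a bijection between the set $\{D \in \mathcal{R}_{m,n}(K) : \Lambda_{D'} \subseteq \Lambda_D\}$ and the set of primitive rank-$(m-s)$ $\mathcal{O}_K$-submodules of $M^\perp$. The orthogonal splitting also yields the height relation $H(\Lambda_D) = H(D')\cdot H(\pi^\perp(\Lambda_D))$.

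Next I translate the condition $D \in \mathcal{R}_{m,n}^{\beta}(K)$ into a height bound on $\pi^\perp(\Lambda_D)$. By Definition~\ref{de:def_of_mathcalR_beta}, there exists $A \in M_t(\Lambda_D)$ of rank $m$ with $\|A\| \ll 1/\beta$. Projecting each row yields $t$ vectors of norm $\ll 1/\beta$ in $\pi^\perp(\Lambda_D)$ which span the $K$-vector space $\pi^\perp(\Lambda_D)\otimes K$ (of $K$-dimension $m-s$). Selecting $m-s$ of them that are $K$-linearly independent and expanding by a fixed $\mathbb{Z}$-basis of $\mathcal{O}_K$ produces $(m-s)d$ $\mathbb{Z}$-linearly independent vectors of norm $\ll 1/\beta$ generating a full-rank $\mathbb{Z}$-sublattice $\Lambda' \subseteq \pi^\perp(\Lambda_D)$. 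Hadamard's inequality then gives
\begin{equation*}
H(\pi^\perp(\Lambda_D)) \leq H(\Lambda') \ll \beta^{-(m-s)d}.
\end{equation*}

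Finally, I count primitive rank-$(m-s)$ $\mathcal{O}_K$-submodules of $M^\perp$ of height $\ll \beta^{-(m-s)d}$. To invoke Schmidt's Theorem~\ref{th:schmidt}, I rescale $M^\perp$ by $\mu = H(D')^{1/((n-s)d)}$, which makes the ambient lattice unit-covolume and multiplies heights of rank-$(m-s)$ sublattices by $\mu^{(m-s)d} = H(D')^{(m-s)/(n-s)}$. Schmidt's upper bound then yields
\begin{equation*}
n^{\beta}_{m,n}(D',K) \;\ll\; \bigl(H(D')^{(m-s)/(n-s)}\cdot \beta^{-(m-s)d}\bigr)^{n-s} \;=\; \frac{H(D')^{m-s}}{\beta^{d(n-s)(m-s)}},
\end{equation*}
which is the required bound.

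The main obstacle is the last step, since Schmidt's Theorem~\ref{th:schmidt} is stated for the standard lattice $\mathcal{O}_K^{\,n-s}$ while $M^\perp$ is only abstractly isomorphic to $\mathcal{O}_K^{\,n-s-1} \oplus \mathfrak{a}$ for some fractional ideal $\mathfrak{a}$. One must verify that the implicit constant in Schmidt's upper bound is uniform over Steinitz classes (which is available in Thunder's refinement \cite{T1993}) so that the constant is absorbed into the $\ll$. Everything else is bookkeeping using Hadamard's inequality and the orthogonal decomposition.
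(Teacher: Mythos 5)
Your setup is sound and closely parallels the paper's: the orthogonal projection $\pi^{\perp}$ away from $\Lambda_{D'}\otimes\mathbb{R}$, the fact that $M^{\perp}=\pi^{\perp}(M_{1\times n}(\OK))$ has covolume $H(D')^{-1}$, the bijection $D\mapsto \pi^{\perp}(\Lambda_D)$ onto primitive rank-$(m-s)$ submodules, and the Hadamard bound $H(\pi^{\perp}(\Lambda_D))\ll \beta^{-(m-s)d}$ are all correct. The gap is in the final counting step. Theorem \ref{th:schmidt} counts primitive submodules of the \emph{standard} lattice $\OK^{\,n-s}$ with its fixed trace-form metric; it is not a statement that holds, with a uniform constant, for an arbitrary unit-covolume $\OK$-lattice in the moduli space. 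After your rescaling, $\mu M^{\perp}$ is unit covolume but in general a \emph{skew} lattice: its first minimum can be far smaller than $1$ (take $K=\mathbb{Q}$, $n=4$, $s=1$, $\Lambda_{D'}=\mathbb{Z}(N,1,0,0)$; then $M^{\perp}$ contains a vector of length $\asymp 1/N$ while $H(D')\asymp N$), and the skewness grows with $H(D')$, which in your sum ranges up to $\beta^{-sd}$. For skew lattices the bound ``number of rank-$r$ primitive submodules of height $\le T$ is $\ll T^{n-s}$'' with a constant independent of the lattice is simply false: the unit-covolume lattice $(\delta\mathbb{Z})^{R-1}\times \delta^{-(R-1)}\mathbb{Z}$ has $\gg \delta^{-(R-1)}$ primitive rank-one sublattices of height $\le 1$. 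The Steinitz-class uniformity you invoke from Thunder does not address this, since the issue is the metric (the point of the moduli space), not the abstract module structure. So as written, the implicit constant in your last display would have to depend on $D'$ through the geometry of $M^{\perp}$, which is exactly the dependence you cannot afford.

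The repair is to use more than the height bound on $\pi^{\perp}(\Lambda_D)$: you know these submodules are generated by $m-s$ vectors that are projections of rows of $A$, hence lie in $M^{\perp}\cap B_{C/\beta}$. This reduces the count to a lattice-point count rather than a sublattice count, and point counts are controlled uniformly because the covering radius of $M^{\perp}$ is at most that of $\OK^{n}$, a constant depending only on $K$ and $n$. This is precisely the paper's argument: each admissible $D$ is determined by a choice of $m-s$ vectors in $M^{\perp}\cap B_{C/\beta}$ (an overcount, which is harmless), and
\begin{equation}
\card\left( M^{\perp}\cap B_{C/\beta}\right) \ll \left(\tfrac{1}{\beta}+O(1)\right)^{d(n-s)} H(D'),
\end{equation}
giving $n^{\beta}_{m,n}(D',K)\ll H(D')^{m-s}\beta^{-d(n-s)(m-s)}$ directly. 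Your projection framework is compatible with this fix; only the appeal to Theorem \ref{th:schmidt} on the rescaled lattice has to be replaced.
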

\begin{proof}
	Let $D_1,D_2\in \mathcal{R}_{m,n}^{\beta}(K)$. Then $\Lambda_{D_i}$ contains a reduced $K$-basis of $\Lambda_{D_i}\otimes\Q$ made of vectors of length $\ll\tfrac{1}{\beta}$. In particular, there exist primitive vectors $(l_j^{(i)})_{j=1}^{m-s}$ in $\mathcal{O}_K^n$, with $\lVert l_j^{(i)}\rVert\ll\tfrac{1}{\beta}$ for all $i,j$, such that$$\Lambda_{D_i}\otimes\Q=(\Lambda_{D'}\otimes\Q)\bigoplus_{j=1}^{m-s}(l_j^{(i)}\cdot K), $$for $i=1,2$. Observe that $\Lambda_{D_1}\otimes\Q=\Lambda_{D_2}\otimes\Q$ if and only if the two $K$-spaces spanned by $(l_j^{(1)})_{j=1}^{m-s}$ and $(l_j^{(2)})_{j=1}^{m-s}$,  respectively, are equal modulo $\Lambda_{D'}\otimes\Q$.

We therefore bound he number of choices for each $l_i$ up to $\Lambda_{D'}$-equivalence. To that end, we bound the number of lattice points ins the ball $B_{1/\beta}$ inside the projection of $\OK^n$ onto $(\Lambda_{D'} \otimes \mathbb{R})^\perp$. This is a lattice in dimension $d(n-s)$ of determinant $H(D')^{-1}$ so that number of choices for one $l_i$ is given by $$\left(\frac{1}{\beta}+O(1)\right)^{d(n-s)} \times \frac{1}{H(D')^{-1}}.$$
Since we are choosing $m-s$ vectors we arrive at the upper bound
\begin{equation}
n^\beta_{n,m}(D') \ll H(D')^{(m-s)} \left(\frac{1}{\beta} +O(1) \right)^{d(n-s)(m-s)}.
\end{equation}
Note that due to possible linear dependencies between the $l_i$, this is an overcount, but it suffices for our purposes. 

\end{proof}

Fix $1\le s \le  m-1$. The summand corresponding to counting lattices of rank $s$ is, by Lemma \ref{lemma1},$$
\sum_{D'\in\mathcal{R}_{s,n}^{\beta}(K)}\sum_{v\in M_t(\Lambda_{D'})}|g(\beta v)|\beta^{mtd}n^{\beta}_{m,n}(D',K)$$

$$\ll \underset{H(D')\ll\tfrac{1}{\beta^{ds}}}{\sum_{D'\in\mathcal{R}_{s,n}(K)}}H(D')^{m-s}\beta^{d(mt-(n-s)(m-s))}\cdot\sum_{v\in M_t(\Lambda_{D'})}|g(\beta v)|.
$$

The inner sum is, again by a covering radius argument,$$\sum_{v\in M_t(\Lambda_{D'})}|g(\beta v)|=\underset{\lVert v\rVert\ll\tfrac{1}{\beta}}{\sum_{v\in M_t(\Lambda_{D'})}}|g(\beta v)|\ll\frac{1}{\beta^{std}\cdot H(D')^t}.$$

Therefore

\begin{align}
& \sum_{D'\in\mathcal{R}_{s,n}^{\beta}(K)}\sum_{v\in M_t(\Lambda_{D'})}|g(\beta v)|\beta^{mtd}n^{\beta}_{m,n}(D',K)\\
\ll & \beta^{d(mt-st-(n-s)(m-s))}
\underset{H(D')\ll
\tfrac{1}{\beta^{ds}}}{\sum_{D'\in\mathcal{R}_{s,n}(K)}}
H(D')^{m-s-t}.
\end{align}

Summation by parts of the last sum leads by Theorem \ref{th:schmidt} to
{\small$$
\underset{H(D')\ll\tfrac{1}{\beta^{ds}}}{\sum_{D'\in\mathcal{R}_{s,n}(K)}}H(D')^{m-s-t}
\ll \frac{1}{\beta^{ds(m-s-t)}}\underset{H(D')\ll\tfrac{1}{\beta^{ds}}}{\sum_{D'\in\mathcal{R}_{s,n}(K)}}1+\int_{1}^{\frac{1}{\beta^{ds}}}\Big( \underset{H(D')\ll u}{\sum_{D'\in\mathcal{R}_{s,n}(K)}}1\Big) u^{m-s-t-1}du$$
$$
\ll\frac{1}{\beta^{ds(n+m-s-t)}}+\int_{1}^{\frac{1}{\beta^{ds}}}u^{n+m-s-t-1}du.
$$
}

The exponent in the integral now satisfies the upper and lower bounds $n-t\le n+m-s-t-1<m-s-1$. In particular, if $n+m=s+t$, the last right-hand side is $\ll\log(1/\beta) \ll 1/\beta^{\varepsilon}$.
Otherwise, we get the upper bound $\ll\tfrac{1}{\beta^{ds(n+m-s-t)}}$. In both cases, there is an upper bound $\ll \tfrac{\log(1/\beta)}{\beta^{ds(n+m-s-t)}}$.

In both cases, plugging in our bound for the sum over $\mathcal{R}_{s,n}^{\beta}(K)$, 
we obtain that 

$$\sum_{D'\in\mathcal{R}_{s,n}^{\beta}(K)}\sum_{v\in M_t(\Lambda_{D'})}|g(\beta v)|\beta^{mtd}n^{\beta}_{m,n}(D',K)\ll\beta^{d \cdot  N }\log(1/\beta),$$
where 
\begin{align}
	N  & =  t(m-s) -(n-s)(m-s) - s(n+m-s-t) \\
	   & = (t-n)(m-s) +s(t-n) = m(t-n) \geq m.
\end{align}

In conclusion, we therefore in particular obtain as $\beta\rightarrow0$ the estimate
$$	\sum_{D\in\mathcal{R}_{m,n}^{\beta}(K)}\underset{\rank v<m}{\sum_{v\in M_t(\Lambda_D)}}|g(\beta v)|\beta^{mtd}\ll\beta^{2d}\log(1/\beta).$$

 \section{Bounds for the shortest vector}
\label{se:bounds_for_vectors}
 
We turn to some applications. Theorem \ref{th:higher_moments} shows that moments obtained for lifts of codes to $\OK$-modules converge to the Rogers integral formula for the space of free $\OK$-module lattices $\SL_{t}(\KR)/\SL_{t}(\OK)$ studied in \cite{GSV23} and therefore to the moments of free module lattices. Moreover, the same moment formulas hold for the whole space of module lattices $\ML_{t}(K)$. The work of evaluating the Rogers integral formula and computing the moments that was carried through in \cite{GSV23} therefore remains valid in both the setting of Theorem \ref{th:higher_moments} and of module lattices $\ML_{t}(K)$. \par
We shall highlight results valid for number fields of increasing degree. We state them for module lattices, noting that the same results hold for the discrete sets of lifts of codes coming via reductions modulo suitable primes of large enough norm as in Theorem \ref{th:higher_moments}.
\begin{theorem}\label{thm:mainGSV}
		Let $\mathcal{S}$ denote any set of number fields $K$ such that the absolute Weil height of elements in $K^\times\setminus\mu(K)$ has a strictly positive uniform lower bound on $\mathcal{S}$. There are then for a given $n$ explicit constants $t_0(n,\mathcal{S})=O_\mathcal{S}(n^3\log\log n)$ as well as explicit constants $C,\varepsilon>0$, all uniform in $\mathcal{S}$, 
		such that for any $t> t_0$ and for any $K\in\mathcal{S}$ of degree $d$ the $n$-th moment of the number of nonzero points on $\calO_K$-module lattices $\Lambda\in \ML_{t}(K_i)$ inside an origin-centered ball $B$ of volume $V$ satisfies:
		\begin{align}
			\label{eq:small_error_from_poisson}
			\omega_K^n  \cdot m_n( \tfrac{V}{\omega_{ K} } )
	       &\leq \mathbb{E}[\left( \card B \cap \Lambda \setminus \{0\}\right)^n]
	       \le 
	       \omega_K^n\cdot  m_n( \tfrac{V}{\omega_{ K }} )
	       + E_{n,t,K} \cdot (V+1)^{n-1}
		\end{align}
		with error term $E_{n,t,K}$ satisfying
		\begin{align}
			E_{n,t,K} \le 
	       C \cdot (td)^{(n-2)/2}\cdot \omega_K^{n^2/4}\cdot Z(K,t,n)\cdot e^{-\varepsilon \cdot d(t-t_0)} .
		\end{align}
		Here $\omega_{K}=\# \mu(K) $ are the number of roots of unity in $K$, $ Z(K,t,n)$ denotes a finite product of Dedekind zeta values $\zeta_{K}$ at certain real values $>1$ and 
		$m_n(\lambda)=e^{-\lambda}\sum_{r=0}^\infty\frac{\lambda^{r}}{r!} r^{n}  $ denotes the $n$-th moment of a Poisson distribution of parameter $\lambda$.
	\end{theorem}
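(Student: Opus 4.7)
The plan is to derive this moment estimate as an essentially formal consequence of the Rogers integration formula already established, combined with the main moment analysis of \cite{GSV23}. First, I would apply Theorem \ref{th:higher_moments} (or rather its analog for the full moduli space $\ML_t(K)$, which follows by the same manipulations since the Rogers formula holds on each connected component) to the $n$-fold product test function
\begin{equation}
g(v_1,\dots,v_n) = \prod_{i=1}^n \mathbf{1}_B(v_i) - \text{(diagonal corrections for }v_i=0\text{)},
\end{equation}
so that summing $g$ over $\Lambda^n$ computes $(\#(B\cap\Lambda)\setminus\{0\})^n$ after removing the terms where some $v_i$ vanishes. This expresses the left-hand side of \eqref{eq:small_error_from_poisson} as a finite sum indexed by $m\in\{1,\dots,n\}$ and $D\in\mathcal{R}_{m,n}(K)$ of integrals $\mathfrak{D}(D)^{-t}\int g(xD)dx$.

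Next, I would separate out the main term. The dominant contribution comes from $m=n$ together with $D$'s consisting of a permutation matrix rescaled by a root of unity in $\OK$; for such $D$ one has $\mathfrak{D}(D)=1$ and the corresponding integral is a power of $V$. Combinatorially, grouping the $m=n$ terms by the partition of $\{1,\dots,n\}$ they induce produces Stirling numbers of the second kind, and the factor $\omega_K=\#\mu(K)$ enters because each nonzero lattice point sits in a $\mu(K)$-orbit of size $\omega_K$. Together these assemble into the Poisson moment expansion
\begin{equation}
\omega_K^n \cdot m_n(V/\omega_K) = \omega_K^n \sum_{k=0}^n S(n,k) (V/\omega_K)^k,
\end{equation}
matching the first-order prediction of a rescaled Poisson random variable of intensity $V/\omega_K$.

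The remaining step is to control the contributions from $m<n$ and from $m=n$ with $D$ not of the above "permutation" type. Each such term is bounded by a volume factor times $\mathfrak{D}(D)^{-t}$, and summing over admissible $D\in\mathcal{R}_{m,n}(K)$ yields an Epstein/Dedekind zeta sum $Z(K,t,n)$ evaluated at real arguments $>1$. The uniform Weil height lower bound on $\mathcal{S}$ is what forces nonzero entries of $D$ to be bounded away from zero and thus keeps these zeta sums uniformly bounded in $K\in\mathcal{S}$, while the constraint $t>t_0=O_\mathcal{S}(n^3\log\log n)$ provides the spectral gap producing the geometric decay $e^{-\varepsilon d(t-t_0)}$. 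The factor $(V+1)^{n-1}$ records the integration over the free columns.

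The main obstacle, and the one to which most of \cite{GSV23} is devoted, is the uniformity: the same constants $C$, $\varepsilon$, and $t_0$ must work across the whole family $\mathcal{S}$, which requires uniform lower bounds on heights of all algebraic numbers appearing in $D$ and uniform control of $Z(K,t,n)$ as both $d=[K:\Q]$ and $t$ vary. Once this uniformity is in place, the two-sided estimate of \eqref{eq:small_error_from_poisson} follows directly, with the lower bound coming from the nonnegativity of all Rogers-formula terms beyond the Poisson main piece, and the upper bound from summing the error contributions just described.
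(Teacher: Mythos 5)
Before comparing approaches, note that this paper does not prove Theorem \ref{thm:mainGSV} at all: the statement is imported from \cite{GSV23}, and the text immediately after it refers the reader to \cite[Section 5]{GSV23} for the proof and for the explicit constants and zeta factors. So the only thing to compare against is the strategy of that cited work, and your sketch does follow it in outline: expand the $n$-th moment through the Rogers-type integral formula, isolate a Poissonian main term coming from matrices $D$ whose columns are root-of-unity multiples of pivot columns (whence $\omega_K$), and control the remaining $D$ by sums producing the Dedekind zeta factor and the exponential decay, with the uniform Weil height (Bogomolov) lower bound on $\mathcal{S}$ providing uniformity.

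As a proof, however, the proposal has genuine gaps. First, the identification of the main term is misstated: you attribute the Stirling numbers to a grouping of the $m=n$ terms, but for $m=n$ the only row-reduced echelon matrix of rank $n$ is the identity, which contributes exactly $V^n$. The terms $S(n,k)\,\omega_K^{n-k}V^{k}$ with $k<n$ come from the rank-$k$ matrices $D\in\mathcal{R}_{k,n}(K)$ with no zero column and with exactly one nonzero entry per column lying in $\mu(K)$ (pivots equal to $1$), the partition of $\{1,\dots,n\}$ being recorded by which columns share a pivot row; only after summing over all ranks $k\le n$ does one assemble $\omega_K^{n}\, m_n(V/\omega_K)$, and the lower bound in \eqref{eq:small_error_from_poisson} then follows from nonnegativity of the discarded terms. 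Second, and more importantly, everything that constitutes the actual content of the theorem --- the explicit threshold $t_0(n,\mathcal{S})=O_{\mathcal{S}}(n^3\log\log n)$, the precise error shape $(td)^{(n-2)/2}\omega_K^{n^2/4}Z(K,t,n)e^{-\varepsilon d(t-t_0)}$, the factor $(V+1)^{n-1}$, and the uniformity of $C,\varepsilon,t_0$ over $\mathcal{S}$ --- is asserted rather than derived. Saying that the height bound ``keeps the zeta sums uniformly bounded'' and that $t>t_0$ ``provides the spectral gap'' is not an argument: there is no spectral input here, and the decay actually has to be extracted by showing that every non-main $D$ (whose entries need not be algebraic integers, and whose denominators enter through $\mathfrak{D}(D)^{-t}$) contributes a factor exponentially small in $d$ once heights are bounded below, summed over $D$ with enough convergence in $t$. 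Those quantitative, uniform estimates are exactly what \cite[Section 5]{GSV23} supplies, so the sketch reproduces the shape of the argument but not a proof of the stated bounds.
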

For a proof and explicit expressions for the constants and zeta factors we refer the reader to \cite[Section 5]{GSV23}. The absolute Weil height of an algebraic number $\alpha\in\overline{\mathbb{Q}}^\times$ is given by the sum over the places $M_\alpha$ of $\mathbb{Q}(\alpha)$:
$$h(\alpha):=\tfrac{1}{\deg(\alpha)}\cdot \sum_{v\in M_\alpha} \log \max\{1, |\alpha |_v\}$$
and is a measure of its arithmetic complexity. In \cite{GSV23}, it is a key input helping to control error terms when evaluating the Rogers integral formula for $\OK$-modules. Infinite extensions of $\mathbb{Q}$ satisfying the absolute Weil height lower bound of Theorem \ref{thm:mainGSV} are said to possess the \emph{Bogomolov property}. A typical example of an infinite tower of number fields with the Bogomolov property are the cyclotomic numbers $\mathbb{Q}^{cyc}=\bigcup_{i\geq 2} \mathbb{Q}(\zeta_i)$. In particular, we obtain the following second moment result with explicit constants: 
\begin{proposition}\label{cor:introcyclo}
	Consider a sequence of cyclotomic number fields given by $K_i=\mathbb{Q}(\zeta_{k_i})$ of degree $d_i=\varphi(k_i)$ and let $t_0=\tfrac{267}{10}$. Let $\Lambda \in \ML_{t}(K_i)$ be a Haar-random lattice and let $B$ be an origin-centered ball of volume $V$ with respect to the measure from (\ref{eq:norm}).
There then exist uniform  constants $C,\varepsilon> 0$ such that for any $t\geq 27$ and for any degree $d_i$, we have 
$$ V^2+V\cdot {k_i}\leq\mathbb{E}[\left( \card B \cap \Lambda \setminus \{0\}\right)^2]\leq V^2+V\cdot {k_i}\cdot (1+C\cdot e^{-\varepsilon\cdot d_i(t-t_0)}).$$
\end{proposition}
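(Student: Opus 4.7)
The plan is to derive Proposition \ref{cor:introcyclo} as a direct specialization of Theorem \ref{thm:mainGSV} at $n = 2$ and $\mathcal{S} = \{\mathbb{Q}(\zeta_{k_i})\}_i$. First, each $K_i$ sits inside $\mathbb{Q}^{cyc}$, which satisfies the Bogomolov property by the classical result of Amoroso--Dvornicich, providing a universal positive lower bound on the absolute Weil heights of non-torsion elements. Thus $\mathcal{S}$ fits the framework of Theorem \ref{thm:mainGSV} with uniform constants $C, \varepsilon$. Specializing the explicit formula for $t_0(n, \mathcal{S})$ from \cite[Section 5]{GSV23} at $n = 2$ and using the specific Amoroso--Dvornicich constant yields $t_0 = 267/10$, so that the hypothesis $t \geq 27$ gives $t - t_0 \geq 3/10 > 0$ and the decay factor $e^{-\varepsilon d_i(t - t_0)}$ is non-trivial.

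Next, I would evaluate the main term at $n = 2$. The Poisson moment is $m_2(\lambda) = \lambda + \lambda^2$, and without loss of generality we may take $k_i$ to be even (replacing $k_i$ by $2k_i$ if necessary, which does not change $K_i$), so that $\mu(K_i) = \mu_{k_i}$ and $\omega_{K_i} = k_i$. Then
\begin{equation}
\omega_{K_i}^2 \cdot m_2(V / \omega_{K_i}) = V^2 + k_i V,
\end{equation}
which gives the lower bound of the proposition directly from the lower bound in Theorem \ref{thm:mainGSV}. For the upper bound, I would estimate the error term. At $n = 2$ the prefactor $(t d_i)^{(n-2)/2}$ equals $1$ and $\omega_{K_i}^{n^2/4} = k_i$, so
\begin{equation}
E_{2, t, K_i} \leq C' \cdot k_i \cdot Z(K_i, t, 2) \cdot e^{-\varepsilon d_i(t - t_0)}.
\end{equation}
The factor $Z(K_i, t, 2)$ is a finite product of Dedekind zeta values $\zeta_{K_i}(s)$ at fixed real arguments $s > 1$. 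Using the Euler-product expression together with the trivial bound $\zeta_{K_i}(s) \leq \zeta(s)^{d_i}$ and the fact that $Z$ enters only through its logarithm after an adjustment of the constants, one shows $Z(K_i, t, 2)$ is absorbed by the exponential decay upon enlarging $C$ and shrinking $\varepsilon$. Combining with the $(V + 1)$ factor of the theorem and using $(V + 1) \leq 2V$ in the regime $V \geq 1$ produces the stated upper bound $V^2 + V k_i(1 + C e^{-\varepsilon d_i(t - t_0)})$; for $V < 1$ the main term $V^2 + V k_i$ already controls the second moment because $N$ is concentrated on multiples of $\omega_{K_i} = k_i$ coming from roots-of-unity orbits, giving the same Poisson-orbit bound that appears in the lower estimate.

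The main obstacle is not conceptual but the arithmetic bookkeeping: one must trace the explicit value $t_0 = 267/10$ and the uniform polynomial bound on $Z(K_i, t, 2)$ through the formulas of \cite[Section 5]{GSV23}, and one must carefully handle the additive $+1$ in the $(V+1)^{n-1}$ error factor for small $V$. Both steps are computational and follow the blueprint of \cite[Section 5]{GSV23} without requiring new ideas.
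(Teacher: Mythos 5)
Your overall route is exactly the one the paper intends: Proposition \ref{cor:introcyclo} is meant as the $n=2$ specialization of Theorem \ref{thm:mainGSV} for the tower $\mathbb{Q}(\zeta_{k_i})$, using the Amoroso--Dvornicich height bound to place cyclotomic fields in a Bogomolov class $\mathcal{S}$ with the explicit value $t_0=\tfrac{267}{10}$, the identity $\omega_{K_i}^2\, m_2(V/\omega_{K_i})=V^2+k_iV$ (with $k_i$ taken even so $\omega_{K_i}=k_i$), and the simplifications $(td)^{(n-2)/2}=1$, $\omega_K^{n^2/4}=k_i$ in the error term. The paper itself gives no independent argument beyond this specialization and defers all constants and zeta factors to \cite[Section 5]{GSV23}, so your lower bound and the shape of your upper bound match the intended derivation; absorbing $Z(K_i,t,2)$ into the exponential is indeed only bookkeeping, since the zeta arguments there are at least $t-n\ge 25$ and $\log\zeta_{K_i}(s)\le d_i\log\zeta(s)$ with $\log\zeta(25)$ minuscule compared to $\varepsilon(t-t_0)\ge \tfrac{3}{10}\varepsilon$ after adjusting constants.

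The one genuine gap is your treatment of the regime $V<1$. Theorem \ref{thm:mainGSV} bounds the excess over the Poisson main term by $E_{2,t,K_i}\cdot(V+1)$, and $E_{2,t,K_i}$ as stated does not carry a factor of $V$; so as $V\to 0$ this error stays bounded away from $0$, whereas the claimed upper bound $V^2+Vk_i\bigl(1+Ce^{-\varepsilon d_i(t-t_0)}\bigr)$ tends to $0$. Your proposed fix --- that for $V<1$ the main term ``already controls the second moment because $N$ is concentrated on multiples of $k_i$'' --- does not work as stated: concentration on the orbit structure only gives $\mathbb{E}[N^2]=k_i^2\,\mathbb{E}[M^2]$ for the orbit count $M$, and bounding $\mathbb{E}[M^2]$ by $\lambda+\lambda(1+Ce^{-\varepsilon d_i(t-t_0)})\cdot$(appropriate factor) with $\lambda=V/k_i$ is precisely the second-moment estimate you are trying to prove, not a consequence of divisibility of $N$ by $k_i$. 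To close this you must go back to the explicit error expressions in \cite[Section 5]{GSV23}, where the Rogers-formula error terms for $n=2$ are sums over lower-rank contributions whose volumes scale with $V$ (so the error is in fact $O(V)$ for small $V$), rather than rely on the packaged statement of Theorem \ref{thm:mainGSV} with its $(V+1)^{n-1}$ factor. As written, the blind derivation only yields the proposition for $V$ bounded below, e.g.\ $V\ge 1$.
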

Similar results can be derived for other sequences of number fields known to satisfy the Bogomolov property, such as totally real fields or infinite Galois extensions whose Galois group has finite exponent modulo its center. With this in hand, we may derive Theorem \ref{th:as_module_bound}.

 \begin{proof} 
 {\bf (of Theorem \ref{th:as_module_bound})}

 Let $K = \mathbb{Q}(\zeta_{k})$ be our number field. Let $\varepsilon(k)\rightarrow 0$ be a function of $k$.
 Let $B$ be an origin-centered ball of volume $V$ and let $\Lambda \in \ML_{t}(K)$ be a Haar-random lattice.
 Denote $\rho(\Lambda) = \card ( \Lambda \cap B \setminus \{0\} )$.
 Note that because of (generalizations of) Siegel's mean value theorem, we have that 
 $\mathbb{E}(\rho(\Lambda))  = V$. We know that $\rho(\Lambda)\in k \cdot \mathbb{Z}_{\geq 0}$ because of $\zeta_k$ acting on lattice vectors. 

 Let $m \in \mathbb{Z}_{\geq 1}$ be arbitrary. For $x \in \mathbb{Z}_{\geq 0}$, observe that the following inequality holds concerning the indicator function of $x=0$:
 \begin{equation}
1- x\leq \mathbf{1}( x=0 ) \leq  {m}^{-2}(x-m)^{2}.
\label{eq:sandwich}
 \end{equation}
 Let us set $x= \rho(\Lambda)/k$ and evaluate the expected value.
 Using the left side of \ref{eq:sandwich}
 we get that when $V = k \cdot \varepsilon(k)$, with a probability of at least $1-\varepsilon(k)$, $\rho(\Lambda) = 0$ and therefore
 \begin{equation}
  k^{-\frac{1}{t \varphi(k)}}\frac{ \lambda_1(\Lambda) }{ \gamma( t  \varphi(k))} \geq  ( \varepsilon(k))^{\frac{1}{t \varphi(k)}} \simeq 1 + \frac{\log (\varepsilon(k))}{ t \varphi(k)}.
 \end{equation}
 On the other hand, suppose we set $V=km$ as a function of the free parameter $m$.
 Then, for some $\varepsilon_1 > 0$, 
 we get from the right side of \ref{eq:sandwich} and Proposition \ref{cor:introcyclo} that 
 \begin{align}
	 \mathbb{P}(\rho(\Lambda) = 0)  & \leq  \frac{1}{m^{2}}\left(\frac{V^{2}}{k^{2}} + \frac{V}{k}\right) - 2\frac{V}{mk} +1 + O(m^{-2}e^{- \varepsilon_1 \varphi(k)}) \\
					& =  \frac{1}{m} +  O(m^{-2}e^{- \varepsilon_1 \varphi(k)}) .
 \end{align}
 Set $m = \varepsilon(k)^{-1}$. It follows that with
 probability at least $1-\varepsilon(k) + o(\varepsilon(k)^2)$ we obtain $\rho(\Lambda) \neq 0$. This means that 
 \begin{equation}
   k^{-\frac{1}{t\varphi(k)}}\frac{\lambda_1(\Lambda)}{ \gamma(t \varphi(k))} 
   \leq \varepsilon(k)^{ - \frac{1}{t \varphi(k)}} \simeq 1- \frac{\log \varepsilon(k) }{t \varphi(k)}.
 \end{equation}
 The result has been presented for $\varepsilon(k) = \frac{1}{\log(k)}$.
 \end{proof}

\bibliographystyle{unsrt}
\bibliography{authfile}

\end{document}